\newtheorem{thm}{Theorem}[section]
\newtheorem{lem}[thm]{Lemma}
\newtheorem{prop}[thm]{Proposition}
\theoremstyle{definition}
\theoremstyle{remark}
\newtheorem{rem}[thm]{Remark}
\numberwithin{equation}{section}
\newcommand{\norm}[1]{\left\Vert#1\right\Vert}
\newcommand{\Real}{\mathbb R}
\newcommand{\eps}{\varepsilon}
\newcommand{\To}{\longrightarrow}
\newcommand{\dist}[0]{\mathrm{dist}}
\newcommand{\ra}{\rangle}
\newcommand{\la}{\langle}
\newcommand{\Comp}{\mathbb C}
\newcommand{\lam}{\lambda}
\title{Harmonic map heat flow with rough boundary data}
\author{Lu Wang}%
\address{Department of Mathematics\\
Massachusetts Institute of Technology\\
77 Massachusetts Avenue, Cambridge, MA 02139}
\email{luwang@math.mit.edu}
\begin{document}
\begin{abstract}
Let $B_1$ be the unit open disk in $\Real^2$ and $M$ be a closed Riemannian manifold. In this note, we first prove the uniqueness for weak solutions of the harmonic map heat flow in $H^1([0,T]\times B_1,M)$ whose energy is non-increasing in time, given initial data $u_0\in H^1(B_1,M)$ and boundary data $\gamma=u_0|_{\partial B_1}$. Previously, this uniqueness result was obtained by Rivi\`{e}re (when $M$ is the round sphere and the energy of initial data is small) and Freire (when $M$ is an arbitrary closed Riemannian manifold), given that $u_0\in H^1(B_1,M)$ and $\gamma=u_0|_{\partial B_1}\in H^{3/2}(\partial B_1)$. The point of our uniqueness result is that no boundary regularity assumption is needed. Second, we prove the exponential convergence of the harmonic map heat flow, assuming that energy is small at all times. 
\end{abstract}
\maketitle

\section{Introduction}
Let $B_1$ be the unit open disk in $\Real^2$ and $M$ be a closed Riemannian manifold. Suppose that $u\in H^1([0,T]\times B_1,M)$ is the weak solution of the initial-boundary value problem for the harmonic map heat flow, given initial data $u_0\in H^1(B_1,M)$ and boundary data $\gamma=u_0|_{\partial B_1}$\footnote{The restriction here is the trace operator acting on $u_0$.}. In this note, we study the uniqueness and the rate of convergence of the weak solution $u$.

Under the additional assumption that $\gamma\in H^{3/2}(\partial B_1,M)$, the initial-boundary value problem for the harmonic map heat flow has been investigated intensively by several mathematicians, such as Chang, Rivi\`{e}re and Freire; see \cite{Ch}, \cite{Ri}, \cite{Fr1}, \cite{Fr2} and \cite{Fr3}. Define 
\begin{equation}
\label{StruweSpace}
V^T=H^1([0,T]\times B_1,M)\cap L^\infty([0,T],H^1(B_1,M))\cap L^2([0,T],H^2(B_1,M)).
\end{equation}
Space $V^T$ plays a crucial role in the previous referred papers. 
However, without $\gamma\in H^{3/2}(\partial B_1)$, we are not able to show that $u$ with non-increasing energy is in $V^{T'}$ for some $T'>0$, as Freire did in \cite{Fr2}. Because, otherwise, it would imply that $\gamma\in H^{3/2}(\partial B_1,M)$ by the Sobolev trace theorem. But the image of trace operator on $H^1(B_1)$ is exactly $H^{1/2}(\partial B_1)$ and $H^{3/2}(\partial B_1)$ is a proper subset of $H^{1/2}(\partial B_1)$.

To get around this, we make use of the interior gradient estimate for $u$ and Hardy's inequality. The main difficulty is to deal with the $L^2$ inner product of $|\nabla u|^2$ and $h^2$ for $\forall h\in H^1_0(B_1)$, which arises from the non-linear term in the harmonic map heat flow equation. First, using H\'{e}lein's existence result of the Coulomb frame, we derive the interior gradient estimate for $u$ with small energy. Second, assuming the energy is non-increasing in time, we conclude that $u(t,\cdot)\To u_0$ in the $H^1(B_1)$ topology and thus we can uniformly bound the energy on small disks for short time. Thus, applying the gradient estimate for $u$ restricted on small disks, we bound $|\nabla u|^2(t,x)$ by $t^{-1}$ and $(1-|x|)^{-2}$ for short time. Finally, by Hardy's inequality, we bound the $L^2$ inner product of $|\nabla u|^2$ and $h^2$ for small time $t>0$.

In section 5, we prove the existence and uniqueness theorem for weak solutions of the harmonic map heat flow in $\cap_{T>0}H^1([0,T]\times B_1,M)$ whose energy is non-increasing in time, given small energy initial data $u_0\in H^1\cap C^0(\bar{B}_1)$ and boundary data $\gamma =u_0|_{\partial B_1}$. By the Sobolev trace theorem, such weak solutions are not in $V^T$ in general. 

Throughout, we use the subscripts $t$, $x_1$, $x_2$ and $r$ to denote taking derivatives with respect to $t$, $x_1$, $x_2$ and $r$; $\nabla\cdot$ and $\nabla^2\cdot$ denote the gradient and the Hessian operator respectively; \textgravedbl$\sup$\textacutedbl in this note is \textgravedbl esssup\textacutedbl in the usual literature; constants in proofs are not preserved when crossing lemmas, propositions and theorems. 

By the Nash embedding theorem, $M$ can be isometrically embedded in some Euclidean space $(\Real^N,\la,\ra)$. Given $w\in H^1(B_1,M)$, we define the energy functional $E(w)=\frac{1}{2}\int_{B_1}|\nabla w|^2$. The harmonic map heat flow is the negative $L^2$ gradient flow of the energy functional. Thus, given $u_0\in H^1(B_1,M)$ and $\gamma=u_0|_{\partial B_1}$, $u\in H^1([0,T]\times B_1,M)$\footnote{By Theorem 3 on page 287 of \cite{E}, if $w\in H^1([0,T]\times B_1)$, then there exists $\tilde{w}\in C^0([0,T],L^2(B_1))$ and $\tilde{w}(t,\cdot)=w(t,\cdot)$ for $a.e.$ $t$. Thus, in this note, we always choose $\tilde{w}$ representing $w$ in $H^1([0,T]\times B_1)$. In other words, we always assume that functions in $H^1([0,T]\times B_1)$ are actually also in $C^0([0,T],L^2(B_1))$.} is the weak solution of the initial-boundary value problem for the harmonic map heat flow, if
\begin{eqnarray}
\label{HMHFEqn}
\left\{\begin{array}{lll}
u_t-\Delta u=-A_u(\nabla u,\nabla u)\quad & \text{on}\quad (0,T)\times B_1\\
u(t,x)=\gamma(x)\quad & \text{for}\quad t\ge 0, x\in \partial B_1\\
\lim_{t\To 0+}u(t,\cdot)=u_0\quad & \text{in}\quad L^2(B_1,M)\quad\text{topology}
\end{array}
\right.
\end{eqnarray}
where $A$ is the second fundamental form of $M$ in $\Real^N$ at the point $u$. We recall that
\begin{equation}
\label{GeoFact}
A_u(\nabla u,\nabla u)\ \text{is perpendicular to}\ M\ \text{at}\ u.
\end{equation}
We define the weak solution $u\in H^1([0,T]\times B_1,M)$ of the first equation in (\ref{HMHFEqn}) by 
\begin{equation}
\label{HMHFEqn1}
\int_0^T\int_{B_1}\la u_t,\xi \ra +\la\nabla u,\nabla\xi \ra +\la A_u(\nabla u,\nabla u),\xi\ra dxdt=0.,
\end{equation}
for $\forall \xi\in C^\infty_c((0,T)\times B_1,\Real^N)$. In \cite{W}, it is shown that this definition is equivalent to that, for $a.e.$ $t\in [0,T]$,
\begin{equation}
\label{HMHFEqn2}
\int_{\{t\}\times B_1}\la u_t,\zeta \ra +\la\nabla u,\nabla\zeta \ra +\la A_u(\nabla u,\nabla u),\zeta\ra dx=0,\ \ \forall \zeta\in H^1_0\cap L^\infty(B_1,\Real^N).
\end{equation}
Note that the definition given by equation (\ref{HMHFEqn2}) allows us to freeze the time and is more convenient for our proofs of Theorems \ref{1stThm} and \ref{2ndThm}.

The main results of this note are two folds. First, we show the uniqueness for weak solutions of (\ref{HMHFEqn}) whose energy is non-increasing. 
\begin{thm}
\label{1stThm}
If $u$ and $v$ are weak solutions of (\ref{HMHFEqn}) in $H^1([0,T]\times B_1,M)$ satisfying $E(u(t_2,\cdot))\leq E(u(t_1,\cdot))$, $E(v(t_2,\cdot))\leq E(v(t_1,\cdot))$ for $t_1\leq t_2$, and having the same initial data $u_0\in H^1(B_1,M)$ and boundary value $\gamma=u_0|_{\partial B_1}$, then $u=v$ on $[0,T]\times B_1$.
\end{thm}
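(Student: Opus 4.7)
The plan is to run a standard energy-difference argument, with the novelty concentrated in how the lack of boundary regularity of $\gamma$ is absorbed through Hardy's inequality. Set $w=u-v$; since $u$ and $v$ share the boundary trace $\gamma$, $w(t,\cdot)\in H^1_0(B_1,\Real^N)$ for $a.e.$ $t$, and closedness of $M$ makes $w$ bounded, so $w(t,\cdot)$ is admissible as $\zeta$ in (\ref{HMHFEqn2}). Testing the weak equation for $u$ and for $v$ against $w$, subtracting, and integrating in $t$ (using $w(0,\cdot)=0$) should yield
\[
\frac{1}{2}\norm{w(t,\cdot)}_{L^2(B_1)}^2+\int_0^t\!\!\int_{B_1}|\nabla w|^2\,dxds=\int_0^t\!\!\int_{B_1}\la A_v(\nabla v,\nabla v)-A_u(\nabla u,\nabla u),w\ra\,dxds.
\]
Writing $A_v(\nabla v,\nabla v)-A_u(\nabla u,\nabla u)=(A_v-A_u)(\nabla v,\nabla v)+A_u(\nabla w,\nabla u+\nabla v)$ and using the Lipschitz bound $|A_v-A_u|\leq C|w|$, Cauchy--Schwarz and Young reduce everything to controlling the single bad term
\[
I(t):=\int_0^t\!\!\int_{B_1}\bigl(|\nabla u|^2+|\nabla v|^2\bigr)|w|^2\,dxds,
\]
since the remaining cross terms of the form $\int|\nabla w|\,|w|\,(|\nabla u|+|\nabla v|)$ can be absorbed into $\tfrac{1}{2}\int|\nabla w|^2+C\int\norm{w}_{L^2}^2$ once $I(t)$ is in hand.

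The heart of the plan is to show that $I(t)\leq\tfrac{1}{2}\int_0^t\!\int_{B_1}|\nabla w|^2$ on some short interval $[0,s_0]$. The hypothesis that the energy is non-increasing, combined with $u(s,\cdot)\to u_0$ in $L^2$, forces $E(u(s,\cdot))\to E(u_0)$ and therefore strong convergence $u(s,\cdot)\to u_0$ in $H^1(B_1)$; equicontinuity of the measure $|\nabla u_0|^2\,dx$ then produces, for any prescribed $\eps>0$, numbers $r_0,s_0>0$ with $E(u(s,\cdot),B_r(x_0))<\eps$ uniformly for $x_0\in B_1$, $r\leq r_0$, $s\in[0,s_0]$, and likewise for $v$. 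Applied on disks of radius $\min(r_0,1-|x_0|)$ centered at $x_0$, the small-energy interior gradient estimate established earlier in the paper via H\'elein's Coulomb frame then yields a pointwise bound of the form $|\nabla u|^2(s,x)+|\nabla v|^2(s,x)\leq C\eps/(1-|x|)^2$ on $[0,s_0]\times B_1$. Since $w(s,\cdot)\in H^1_0(B_1)$, Hardy's inequality in the unit disk gives
\[
\int_{B_1}\frac{|w(s,\cdot)|^2}{(1-|x|)^2}\,dx\leq C_H\int_{B_1}|\nabla w(s,\cdot)|^2\,dx,
\]
and multiplying the two bounds and integrating in $s$ delivers the absorption once $\eps$ is chosen small enough that $CC_H\eps\leq\tfrac{1}{2}$. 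What remains is $\norm{w(t,\cdot)}_{L^2}^2\leq C\int_0^t\norm{w(s,\cdot)}_{L^2}^2\,ds$ on $[0,s_0]$; Gronwall plus $w(0,\cdot)=0$ forces $w\equiv 0$ on $[0,s_0]$.

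To propagate uniqueness to all of $[0,T]$ I would iterate. At $t=s_0$ the coincidence $u(s_0,\cdot)=v(s_0,\cdot)$ provides new $H^1$ initial data still with trace $\gamma$, and both flows still satisfy the non-increasing energy condition, so the argument restarts on $[s_0,2s_0]$, and so on. The admissible step length $s_0$ depends only on the $H^1$ data at the base time, which stays bounded by monotonicity of the energy, so finitely many steps exhaust $[0,T]$.

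The hard part of the plan, and the genuinely new step relative to \cite{Fr2}, is the estimate on $I(t)$. With the stronger hypothesis $\gamma\in H^{3/2}$ one has $V^T$-regularity and a uniform gradient bound up to $\partial B_1$, making this step routine; with $\gamma$ only in $H^{1/2}$, $|\nabla u|^2$ can blow up at the boundary, and the coupling of the small-energy interior estimate on disks of radius $1-|x|$ with Hardy's inequality for $H^1_0$ functions is precisely the device that saves the argument.
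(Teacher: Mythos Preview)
Your overall architecture is right --- test the difference $w=u-v$, reduce to controlling $\int(|\nabla u|^2+|\nabla v|^2)|w|^2$, couple the interior gradient estimate with Hardy's inequality --- but there is a genuine gap in the gradient bound you invoke. You claim that on $[0,s_0]\times B_1$ one has $|\nabla u|^2(s,x)\le C\eps/(1-|x|)^2$. This is false: at $s=0$ the data $u_0$ is only in $H^1$, so $|\nabla u_0|$ need not be pointwise controlled at all, and for small $s>0$ parabolic smoothing has only acted on a spatial scale $\sim\sqrt{s}$. What the interior estimate (Lemma~\ref{1stLem}) actually gives, after rescaling with $\lam=\min\{\sqrt{s},1-|x|\}$, is
\[
|\nabla u|^2(s,x)\le C_1\eps\bigl[s^{-1}+(1-|x|)^{-2}\bigr],
\]
and the $s^{-1}$ term cannot be dropped. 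Feeding this into your inequality produces an additional contribution $C\eps\int_0^t s^{-1}\norm{w(s,\cdot)}_{L^2}^2\,ds$, and the non-integrable factor $s^{-1}$ blocks the naive Gronwall step you propose.

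The paper handles exactly this obstruction. Two ingredients are added: first, from $w(0,\cdot)=0$ and $w\in H^1$ in time one has $\norm{w(s,\cdot)}_{L^2}^2\le s\int_0^s\!\int_{B_1}|w_t|^2$, which makes $\int_0^{T'} s^{-3/2}\norm{w(s,\cdot)}_{L^2}^2\,ds$ finite; second, the whole energy identity is run with the weight $t^{-1/2}$ (Lemma~\ref{3rdLem}), so that after integrating $-\la w_t,w\ra t^{-1/2}$ by parts in $t$ the boundary term at $t=0$ vanishes and the $s^{-1}$ piece of the gradient bound is absorbed directly rather than through Gronwall. A smaller point: the paper does not decompose $A_v(\nabla v,\nabla v)-A_u(\nabla u,\nabla u)$ as you do, but uses (\ref{GeoFact}) together with Lemma~\ref{EleGeo} to get $|\la A_u(\nabla u,\nabla u),w\ra|\le C|\nabla u|^2|w|^2$ in one stroke, which avoids your cross terms entirely. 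Your plan becomes correct once you reinstate the $s^{-1}$ term and incorporate one of these devices to neutralize it.
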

\begin{rem}
\label{1stRem}
In \cite{BDV}, Bertsch, Dal Passo and van der Hout proved that there exist initial data $u_0\in H^1(B_1,S^2)$ and boundary data $\gamma=u_0|_{\partial B_1}$ such that (\ref{HMHFEqn}) has infinitely many weak solutions which do not satisfy the non-increasing energy condition. Thus, Theorem \ref{1stThm} appears to be the optimal uniqueness statement for weak solutions of the harmonic map heat flow with time independent boundary data. 
\end{rem}
Second, we study the rate of convergence of small energy weak solutions of (\ref{HMHFEqn}). And we conclude that
\begin{thm}
\label{2ndThm}
There exists $\eps_0>0$, depending only on $M$, so that: if $u$ is a weak solution of (\ref{HMHFEqn}) in $\cap_{T>0}H^1([0,T]\times B_1,M)$ satisfying that $E(u(t,\cdot))<\eps_0$ for $a.e.\ t$, then there exist $T_0>0$, $\alpha_0>0$ and $C_0>0$ such that, for $a.e \ t>T_0$,
\begin{equation}
\label{ConvEqn}
\norm{u(t,\cdot)-u_\infty}_{H^1}\le C_0\text{e}^{-\alpha_0 t},
\end{equation}
where $u_\infty$ is some harmonic map from $B_1$ to $M$ with the same boundary value $\gamma$.
\end{thm}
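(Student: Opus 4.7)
The plan is to first use the energy identity to extract a subsequential harmonic-map limit $u_\infty$, then promote this to full $H^1$ convergence via the stationary specialization of the argument behind Theorem \ref{1stThm}, and finally obtain the exponential rate from a Poincar\'e--absorption differential inequality for $w:=u-u_\infty$ based on the same $|\nabla u|^2 h^2$ estimate that drives Theorem \ref{1stThm}.

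\textbf{Step 1 (subsequential limit).} Testing (\ref{HMHFEqn2}) against $u_t$ (admissible after a standard truncation, using that $A_u(\nabla u,\nabla u)\perp u_t$ by (\ref{GeoFact})) gives the energy identity $\frac{d}{dt}E(u(t,\cdot))=-\|u_t(t,\cdot)\|_{L^2}^2$, so $\int_0^\infty\|u_t\|_{L^2}^2\,dt\le E(u_0)$. Pick $t_k\to\infty$ with $\|u_t(t_k,\cdot)\|_{L^2}\to 0$. Since $E(u(t_k,\cdot))<\eps_0$, the Coulomb-frame interior gradient bound used throughout the paper provides $H^1_{\mathrm{loc}}$ compactness, yielding along a subsequence a weak harmonic map $u_\infty\in H^1(B_1,M)$ of energy $\le\eps_0$ with $u_\infty|_{\partial B_1}=\gamma$. \textbf{Step 2 (uniqueness of $u_\infty$).} If $u_\infty,\tilde u_\infty$ are two such small-energy weak harmonic maps sharing boundary $\gamma$, set $h:=u_\infty-\tilde u_\infty\in H^1_0(B_1)$ and subtract the two equations; testing against $h$ gives
\begin{equation*}
\|\nabla h\|_{L^2}^2=-\int_{B_1}\la A_{u_\infty}(\nabla u_\infty,\nabla u_\infty)-A_{\tilde u_\infty}(\nabla\tilde u_\infty,\nabla\tilde u_\infty),h\ra\,dx\le C\sqrt{\eps_0}\,\|\nabla h\|_{L^2}^2,
\end{equation*}
by the Lipschitz bound on $A$ together with the key estimate $\int|\nabla u_\infty|^2 h^2\le C\eps_0\|\nabla h\|_{L^2}^2$ (the stationary analogue of the estimate driving Theorem \ref{1stThm}). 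For $\eps_0$ small this forces $h\equiv 0$, so the subsequential limit $u_\infty$ is unique and $u(t,\cdot)\to u_\infty$ in $H^1$.

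\textbf{Step 3 (exponential rate).} Now $w(t,\cdot)=u(t,\cdot)-u_\infty\in H^1_0\cap L^\infty(B_1)$ solves $w_t-\Delta w=-(A_u(\nabla u,\nabla u)-A_{u_\infty}(\nabla u_\infty,\nabla u_\infty))$. Testing against $w$ (legal since $w|_{\partial B_1}=0$) yields
\begin{equation*}
\tfrac12\tfrac{d}{dt}\|w\|_{L^2}^2+\|\nabla w\|_{L^2}^2=-\int_{B_1}\la A_u(\nabla u,\nabla u)-A_{u_\infty}(\nabla u_\infty,\nabla u_\infty),w\ra\,dx.
\end{equation*}
Decompose the integrand as $(A_u-A_{u_\infty})(\nabla u,\nabla u)\cdot w+A_{u_\infty}(\nabla w,\nabla u+\nabla u_\infty)\cdot w$; the Lipschitz bound on $A$, Cauchy--Schwarz, and two applications of the $|\nabla u|^2 h^2$-estimate (one for $u$, one for $u_\infty$, both with $h=w$) control the right-hand side by $C\sqrt{\eps_0}\|\nabla w\|_{L^2}^2$. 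For $\eps_0$ small, absorption gives $\frac{d}{dt}\|w\|_{L^2}^2+\|\nabla w\|_{L^2}^2\le 0$, and Poincar\'e's inequality $\|\nabla w\|_{L^2}^2\ge\lam_1\|w\|_{L^2}^2$ on $B_1$ then yields $\|w(t,\cdot)\|_{L^2}^2\le Ce^{-\lam_1 t}$. Integrating the differential inequality over $[t,t+1]$ gives the time-averaged bound $\int_t^{t+1}\|\nabla w(s,\cdot)\|_{L^2}^2\,ds\le Ce^{-\lam_1 t}$, and a short parabolic-regularity argument (interior smoothness from small energy, plus an $L^2$ estimate on $w_t$ obtained by differentiating the flow in time and re-absorbing) upgrades this to pointwise $H^1$ decay $\|\nabla w(t,\cdot)\|_{L^2}^2\le C_0 e^{-\alpha_0 t}$, which combined with $L^2$ decay gives (\ref{ConvEqn}) with an $\alpha_0$ slightly smaller than $\lam_1/2$.

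The main obstacle is the same one already resolved in Theorem \ref{1stThm}: without any boundary regularity on $\gamma$ we cannot bound the non-linear pairing $\int|\nabla u|^2 w^2$ by $\|\nabla w\|_{L^2}^2$ via Sobolev embedding directly, and the Coulomb-frame interior gradient bound combined with Hardy's inequality and the short-time smallness of energy on concentric subdisks is essential. Once that key estimate is in hand, the genuinely new content of Theorem \ref{2ndThm} is the Poincar\'e-absorption step and the mild parabolic regularization needed to pass from time-averaged $H^1$ decay to pointwise decay.
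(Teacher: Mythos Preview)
Your overall strategy---fix a harmonic limit $u_\infty$ first and then run a Gronwall argument on $w=u-u_\infty$---differs from the paper's, which never names the limit: Lemma~\ref{4thLem} establishes the differential inequality $\tfrac{d}{dt}\|D^hu\|_{L^2}^2\le -(C_s^{-1}-t^{-1})\|D^hu\|_{L^2}^2$ (via the gradient bound, Hardy, and Poincar\'e applied to $D^hu\in H^1_0$), giving $\|u_t(t)\|_{L^2}^2\le Ce^{-\alpha t}$; a final computation then shows $\|\nabla u(t_1)-\nabla u(t_2)\|_{L^2}^2\le 4C_s(\|u_t(t_1)\|^2+\|u_t(t_2)\|^2)$, so $u(t,\cdot)$ is Cauchy in $H^1$. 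Both routes rest on the same gradient--Hardy pairing, but your version has two genuine gaps, and filling either one reproduces Lemma~\ref{4thLem}.

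In Step~1 you claim the energy identity by ``testing against $u_t$ after a standard truncation'', but $u_t$ is only in $L^2(B_1)$, not in $H^1_0\cap L^\infty$, and the hypothesis of Theorem~\ref{2ndThm} is merely $E(u(t,\cdot))<\eps_0$---energy is \emph{not} assumed non-increasing---so there is no monotone quantity to exploit. The legal surrogate test function is the difference quotient $D^hu\in H^1_0\cap L^\infty$, but then $\la A_u(\nabla u,\nabla u),D^hu\ra$ does not vanish by orthogonality (Lemma~\ref{EleGeo} gives only $|(D^hu)^\perp|\le Ch|D^hu|^2$), and controlling the resulting term $\int|\nabla u|^2|D^hu|^2$ already requires the gradient--Hardy machinery of Lemma~\ref{4thLem}. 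Without that lemma you cannot produce $t_k\to\infty$ with $\|u_t(t_k)\|_{L^2}\to 0$ and hence cannot verify that your subsequential limit is harmonic. (You could sidestep this by taking $u_\infty$ to be the small-energy harmonic map with boundary $\gamma$ furnished by direct minimization and H\'elein regularity; this would be cleaner than your Step~1.)

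The second gap is the upgrade in Step~3 from time-averaged to pointwise $H^1$ decay. Your ``$L^2$ estimate on $w_t$ obtained by differentiating the flow in time and re-absorbing'' is precisely Lemma~\ref{4thLem} again: differentiating in time means passing to $D^hu$, and the re-absorption is the gradient--Hardy--Poincar\'e step. Once Lemma~\ref{4thLem} is granted, however, the paper's two-line Cauchy estimate already finishes the theorem, so your Step~3 becomes a detour rather than an alternative.
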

\begin{rem}
\label{3rdRem}
It follows from Corollary 3.3 in \cite{CM2} that $u_\infty$ is the unique harmonic map in the class of 
\begin{equation*}
\{w\in H^1(B_1,M)\ \vline\  w|_{\partial B_1}=\gamma\ \text{and}\ E(w)\le E_1\},
\end{equation*}
where $E_1>0$ is a constant depending only on $M$.
\end{rem}
\begin{rem}
\label{2ndRem}
Although Theorem \ref{1stThm} and \ref{2ndThm} are stated for the unit open disk in this note, the proofs could be modified to apply to any bounded open set in $\Real^2$ and even general two dimensional Riemannian manifolds. 
\end{rem}
\textbf{Acknowledgement.} The author would like to thank Prof. Tobias Colding for suggesting this question and his continuous guidance. Also, the author is grateful to Jacob Bernstein and the anonymous referees for their useful comments.

\section{Interior gradient estimate for the harmonic map heat flow}
In this section, we derive the interior gradient estimate for small energy solutions of the harmonic map heat flow. This is one of the key ingredients in the proofs of Theorem \ref{1stThm} and Theorem \ref{2ndThm}. First, using H\'{e}lein's existence result of the Coulomb frame, we show that $u(t,\cdot)\in H^2(B_{1/2},M)$ for $a.e.\ t$. Next, we follow Struwe's method in \cite{St1} to conclude that $u\in L^2([0,\bar{T}],H^2(B_{1/4},M))$ for $1<\bar{T}<2$ and obtain the gradient estimate for $u$ at $(1,0,0)$.

The following elementary geometric fact is obtained in \cite{CM1} and will be used frequently in this note. For self-containedness, we include the proof in Appendix A. 
\begin{lem}
\label{EleGeo}
(Lemma A.1 in \cite{CM1})
There exists $C>0$, depending on $M$, so that: if $x,y\in M$, then $|(x-y)^{\perp}|\le C|x-y|^2$, where $(x-y)^{\perp}$ is the normal component to $M$ at $y$.
\end{lem}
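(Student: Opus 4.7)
\medskip

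\noindent\textbf{Proof proposal for Lemma \ref{EleGeo}.} The plan is to use the fact that, locally near any point of $M$, the manifold can be written as a graph of a smooth function $f$ over its tangent space, with $f(0)=0$ and $df(0)=0$. The quadratic vanishing of $f$ at the origin is exactly what produces the $|x-y|^{2}$ on the right-hand side, and the closedness (compactness) of $M$ will let us make the constants uniform.

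\smallskip

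\noindent First I would fix a small $\delta>0$, depending only on $M$, so that for every $y\in M$ the exponential--type ``graph chart'' is defined on the ball of radius $\delta$ in $T_{y}M$: there exists a smooth map
\[
 F_{y}:T_{y}M\cap B_{\delta}(0)\longrightarrow \Real^{N},\qquad F_{y}(v)=y+v+f_{y}(v),
\]
whose image is a neighborhood of $y$ in $M$, where $f_{y}$ takes values in the normal space $N_{y}M$ and satisfies $f_{y}(0)=0$, $df_{y}(0)=0$. Such a $\delta$ exists by the tubular neighborhood theorem together with the compactness of $M$; by the same compactness the $C^{2}$ norms of $f_{y}$ are bounded by a constant $C_{1}$ independent of $y$. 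Taylor's theorem then gives
\[
 |f_{y}(v)|\le C_{1}|v|^{2}\qquad\text{for all } v\in T_{y}M\cap B_{\delta}(0).
\]

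\smallskip

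\noindent Next I would split into two cases depending on $|x-y|$. If $|x-y|$ is smaller than some $\eta>0$ (to be chosen in terms of $\delta$ and $C_{1}$), then $x$ lies in the graph chart around $y$, so $x-y=v+f_{y}(v)$ with $v\in T_{y}M$. Since $v$ and $f_{y}(v)$ are orthogonal, $|v|\le |x-y|$ and the normal component of $x-y$ at $y$ is exactly $f_{y}(v)$. The Taylor estimate above then yields
\[
 |(x-y)^{\perp}|=|f_{y}(v)|\le C_{1}|v|^{2}\le C_{1}|x-y|^{2},
\]
which is the desired inequality in this regime. In the remaining case $|x-y|\ge \eta$, closedness of $M$ gives $|x-y|\le \mathrm{diam}(M)$, hence
\[
 |(x-y)^{\perp}|\le |x-y|=\frac{|x-y|^{2}}{|x-y|}\le \frac{1}{\eta}|x-y|^{2}.
\]
Taking $C=\max\{C_{1},1/\eta\}$ finishes the proof.

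\smallskip

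\noindent I do not expect any serious obstacle: the only point requiring a little care is making the chart radius $\delta$ and the $C^{2}$ bound on $f_{y}$ uniform in $y\in M$, which is where the ``closed manifold'' assumption is used. Everything else is a routine Taylor expansion combined with the case split on $|x-y|$.
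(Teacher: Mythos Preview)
Your argument is correct. The two proofs share the same overall architecture---a case split on whether $|x-y|$ is small or large, with the large case handled by the trivial bound $|(x-y)^{\perp}|\le |x-y|\le \eta^{-1}|x-y|^2$---but they treat the small-distance case by genuinely different mechanisms. The paper first establishes (Proposition~\ref{IntrinDist}) that for $|x-y|$ small one has $\dist_M(x,y)\le 2|x-y|$, then takes the unit-speed minimizing geodesic $\gamma$ from $y$ to $x$ and writes
\[
|(x-y)^{\perp}|=\int_0^l\la\gamma'(s),V\ra\,ds=\int_0^l\int_0^s\la\gamma''(\tau),V\ra\,d\tau\,ds\le \tfrac{1}{2}\sup_M|A|\cdot l^2\le 2\sup_M|A|\cdot|x-y|^2,
\]
where $V$ is the unit vector along $(x-y)^{\perp}$. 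In contrast, you parametrize $M$ near $y$ as a graph $v\mapsto y+v+f_y(v)$ over $T_yM$, so that $(x-y)^{\perp}=f_y(v)$ exactly, and then read off the quadratic bound from Taylor's theorem. Your route is arguably more elementary---it avoids the auxiliary comparison between intrinsic and extrinsic distance---while the paper's route yields the more explicit constant $C=\max\{\eps_5^{-1},2\sup_M|A|\}$, tied directly to the second fundamental form. Both rely on compactness of $M$ in essentially the same place: you to make the chart radius and the $C^2$ bound on $f_y$ uniform in $y$, the paper to obtain the uniform $\eps_5$ in Proposition~\ref{IntrinDist}.
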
 
First, we derive the local integral bounds for $|\nabla^2u|$ and $|\nabla u|$:
\begin{lem}
\label{5thLem}
Suppose that $1<\bar{T}<2$ and $u\in H^1([0,\bar{T}]\times B_1,M)$ satisfies 
\begin{equation}
\label{2ndEqn}
u_t-\Delta u=-A_u(\nabla u,\nabla u)
\end{equation}
on $(0,\bar{T})\times B_1$. Then there exists $\eps>0$, depending only on $M$, so that: if $E(u(t,\cdot))\le \eps$ for $a.e.$ $t\in [0,\bar{T}]$, then
\begin{eqnarray}
\label{26thEqn}
&&\int_0^{\bar{T}}\int_{B_{1/4}}|\nabla^2u|^2dxdt\le 10^5\sup_{0\le t\le\bar{T}}E(u(t,\cdot)),\\
&&\int_0^{\bar{T}}\int_{B_{1/4}}|\nabla u|^4dxdt\le 10^8\sup_{0\le t\le\bar{T}}E(u(t,\cdot))^2.
\end{eqnarray}
\end{lem}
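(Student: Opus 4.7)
The plan is to prove the two bounds in two stages: first, upgrade the spatial regularity of $u(t,\cdot)$ at almost every fixed time using H\'elein's Coulomb frame, and then run a Struwe-type multiplier argument in space-time to propagate this into an $L^2_t H^2_x$ estimate on a slightly smaller ball. The $L^4$ bound on $\nabla u$ will then drop out of an interpolation.

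For the pointwise-in-time step, I would freeze $t$ and read (\ref{2ndEqn}) as an elliptic equation $\Delta u(t,\cdot)=u_t(t,\cdot)+A_u(\nabla u,\nabla u)(t,\cdot)$ on $B_1$. Under the smallness hypothesis $E(u(t,\cdot))\le\eps$, H\'elein's theorem produces a Coulomb orthonormal frame on the pullback normal bundle over $B_{1/2}$, in which the quadratic nonlinearity can be rewritten as a sum of Jacobians and hence handled by Wente-type estimates. The upshot is $u(t,\cdot)\in H^2(B_{1/2})$ for a.e.\ $t$, with a quantitative local bound controlled by $E(u(t,\cdot))$ and $\|u_t(t,\cdot)\|_{L^2}$. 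This also justifies using $\Delta u$ as a test function in the next step.

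For the space-time estimate, I would choose a cutoff $\eta\in C_c^\infty(B_{1/2})$ with $\eta\equiv 1$ on $B_{1/4}$ and $|\nabla\eta|\le 10$, multiply (\ref{2ndEqn}) by $-\eta^2\Delta u$, and integrate over $[0,\bar T]\times B_1$. Integration by parts in $x$ converts the $u_t$ term into $\tfrac12\tfrac{d}{dt}\int\eta^2|\nabla u|^2$ plus a commutator $2\int u_t\,\eta\nabla\eta\cdot\nabla u$; the $\Delta u$ term yields $\int\eta^2|\Delta u|^2$, which controls $\int_{B_{1/4}}|\nabla^2 u|^2$ up to harmless boundary terms involving $|\nabla\eta|^2|\nabla u|^2$. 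The dangerous term is the nonlinear one, which using (\ref{GeoFact}) and $|A_u(\nabla u,\nabla u)|\le C|\nabla u|^2$ is dominated by
\begin{equation*}
\int_0^{\bar T}\!\int_{B_{1/2}}|\nabla u|^2\,\eta^2|\Delta u|\,dx\,dt\le \Big(\int\!\int |\nabla u|^4\eta^2\Big)^{1/2}\Big(\int\!\int\eta^2|\Delta u|^2\Big)^{1/2}.
\end{equation*}
By the Ladyzhenskaya inequality in $\Real^2$, $\int_{B_{1/2}}|\nabla u|^4\eta^2\le C\,E(u(t,\cdot))\,\|\eta\nabla^2 u\|_{L^2}^2+\text{l.o.t.}$, and the smallness $E(u(t,\cdot))\le\eps$ lets me absorb this into the good $\int\eta^2|\Delta u|^2$ term on the left after fixing $\eps$ small enough depending only on $M$. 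Integrating in time from $0$ to $\bar T$ and using the energy bound yields (\ref{26thEqn}) with an explicit constant.

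The $L^4$ bound on $\nabla u$ is then immediate: by the same Ladyzhenskaya/Gagliardo--Nirenberg inequality applied slicewise,
\begin{equation*}
\int_{B_{1/4}}|\nabla u|^4\,dx\le C\,E(u(t,\cdot))\int_{B_{1/2}}|\nabla^2 u|^2\,dx+\text{l.o.t.},
\end{equation*}
so integrating in $t$ and invoking (\ref{26thEqn}) gives the second inequality. The main obstacle I anticipate is the absorption step: the quartic nonlinearity has exactly the same scaling as the principal term $\int\eta^2|\Delta u|^2$, so the proof genuinely depends on choosing $\eps$ small in terms of the constants from H\'elein's theorem and Ladyzhenskaya's inequality; the technical subtlety is checking that the cutoff commutator terms (involving $\nabla\eta$) remain controlled by the energy alone.
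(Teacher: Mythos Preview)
Your overall strategy---H\'elein's Coulomb frame to get $u(t,\cdot)\in H^2_{\mathrm{loc}}$ at a.e.\ fixed time, then a cutoff/Ladyzhenskaya absorption argument---matches the paper's. But there is one genuine gap: you flag the commutator $2\int u_t\,\eta\nabla\eta\cdot\nabla u$ as a ``technical subtlety'' and then never bound it in terms of $\sup_t E(u(t,\cdot))$. This is not automatic. The hypothesis $u\in H^1([0,\bar T]\times B_1)$ only says $u_t\in L^2$, with no quantitative bound in terms of the energy, so Cauchy--Schwarz on the commutator will not close unless you separately prove
\[
\int_0^{\bar T}\!\int_{B_1}|u_t|^2\phi^2\,dx\,dt\le C\sup_{0\le t\le\bar T}E(u(t,\cdot)).
\]
The paper obtains exactly this bound (its inequality (\ref{10thEqn})) by testing (\ref{2ndEqn}) against $u\phi^2$, following Struwe's Lemma~3.4; once you have it your multiplier argument does go through. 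Alternatively you could substitute $u_t=\Delta u-A_u(\nabla u,\nabla u)$ back into the commutator and absorb both resulting pieces on the left, which also works under the smallness hypothesis---but either way, this step is the heart of the estimate, not a detail to be deferred.

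As a minor methodological difference: the paper never tests with $-\eta^2\Delta u$. It instead freezes $t$, uses the orthogonality $u_t\perp A_u(\nabla u,\nabla u)$ to get the pointwise identity $|\Delta u|^2=|u_t|^2+|A_u(\nabla u,\nabla u)|^2$, and then bounds $\int\phi^2|\Delta u|^2$ from above by $\int\phi^2|u_t|^2+C\int\phi^2|\nabla u|^4$ and from below by $\tfrac12\int\phi^2|\nabla^2 u|^2-C\int|\nabla\phi|^2|\nabla u|^2$. This sidesteps your commutator altogether, at the price of still needing the $\int\!\int|u_t|^2\phi^2$ bound. Both routes are valid; the paper's is slightly cleaner because the orthogonality gives an exact decomposition rather than an inequality.
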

\begin{proof}
First, note that $|u_t|\in L^2(B_1)$ for $a.e.\ t$. Fix such $t$. Following the proof of Theorem 4.1.1 in \cite{He}, there exists $\delta_1>0$, depending only on $M$, so that: if $E(u(t,\cdot))\leq \delta_1$, then there is a finite energy harmonic section (so called \textgravedbl Coulomb frame\textacutedbl) $e(t)=(e_1(t),\dots,e_n(t))$ of the bundle of orthonormal frames for $u(t,\cdot)^*(TM)$, and one can construct $\beta(t)\in L^\infty(B_1,GL(n,\Comp))$ satisfying that $|\beta(t)|\leq \lam_1$, $|\beta(t)^{-1}|\leq \lam_1$, and 
\begin{equation}
\partial_{\bar{z}}(\beta^{-1}\alpha(t))
=\frac{1}{4}\beta(t)^{-1}f,
\end{equation}
where $\lam_1$ depends only on $M$ and the upper bound of the energy of $u(t,\cdot)$, $z=x_1+ix_2$, $\alpha=(\la \partial_z u,e_1\ra,\dots,\la \partial_z u,e_n\ra)$  and $f=(\la u_t,e_1\ra,\dots,\la u_t,e_n\ra)$. Thus, by the elliptic regularity for $\partial_{\bar{z}}$ operator (see the theorem on page 80 of \cite{Ha}), $\beta^{-1}\alpha(t)\in H^1(B_{3/4})$. It follows from the Sobolev embedding theorem (see Theorem 2 on page 265 of \cite{E}) and $|\beta(t)|\leq \lam_1$ that $\alpha(t)\in L^p(B_{3/4})$ for $1<p<\infty$. In particular, $|\nabla u|(t,\cdot)\in L^4(B_{3/4})$. Therefore, by Theorem 8.8 in \cite{GT}, $u(t,\cdot)\in H^2(B_{1/2})$. 

Next, let $\phi$ be a smooth cut-off function, which is one in $B_{1/4}$, compactly supported in $B_{1/2}$, $0\leq\phi\leq 1$ and $|\nabla\phi|\leq 8$. Thus, by Lemma 6.7 in Chapter III of \cite{St2} and equation (\ref{2ndEqn}), 
\begin{eqnarray*}
&&\int_{\{t\}\times B_1}|\Delta u|^2\phi^2dx\le\int_{\{t\}\times B_1}|u_t|^2\phi^2dx+\sup_{M}|A|^2\int_{\{t\}\times B_1}|\nabla u|^4\phi^2\\
&\leq&\int_{\{t\}\times B_1}|u_t|^2\phi^2dx
+\lam_2 E(u(t,\cdot))\left(\int_{\{t\}\times B_1}|\nabla^2 u|^2\phi^2dx+\int_{\{t\}\times B_1}|\nabla u|^2dx\right),
\end{eqnarray*}
where $\lam_2=512\sup_M|A|^2$. On the other hand, approximating $u(t,\cdot)$ by smooth functions in $H^2(B_{1/2})$ and integration by parts, give
\begin{equation}
\int_{\{t\}\times B_1}|\Delta u|^2\phi^2dx\geq \frac{1}{2}\int_{\{t\}\times B_1}|\nabla^2u|^2\phi^2dx-8\int_{\{t\}\times B_1}|\nabla u|^2|\nabla \phi|^2dx.
\end{equation}
If $4\lam_2 E(u(t,\cdot))\leq 1$, then 
\begin{equation}
\label{11thEqn}
\int_{\{t\}\times B_1}|\nabla^2 u|^2\phi^2dx\leq 32\int_{\{t\}\times B_1}\left[|u_t|^2\phi^2+|\nabla u|^2(1+|\nabla\phi|^2)\right]dx.
\end{equation}
Thus, integrating over $[0,\bar{T}]$, we have
\begin{equation}
\int_0^{\bar{T}}\int_{B_1}|\nabla^2u|^2\phi^2dxdt\leq 32\int_0^{\bar{T}}\int_{B_1}\left[|u_t|^2\phi^2+|\nabla u|^2(1+|\nabla\phi|^2)\right]dxdt,
\end{equation}
if $E(u(t,\cdot))\leq \min\{\delta_1,\lam_2^{-1}/4\}$ for $a.e.\ t$. And it follows from the proof of Lemma 3.4 in \cite{St1} (replacing the test function $u$ by $u\phi^2$) that
\begin{equation}
\label{10thEqn}
\int_0^{\bar{T}}\int_{B_1}|u_t|^2\phi^2dxdt\leq 1026\sup_{0\leq t\leq \bar{T}}E(u(t,\cdot)).
\end{equation}
Therefore,
\begin{equation}
\label{5thEqn}
\int_0^{\bar{T}}\int_{B_1}|\nabla^2u|^2\phi^2dxdt\leq 10^5\sup_{0\leq t\leq \bar{T}}E(u(t,\cdot)),
\end{equation}
and it follows from Lemma 6.7 in Chapter III of \cite{St2} that
\begin{eqnarray*}
\int_0^{\bar{T}}\int_{B_1}|\nabla u|^4\phi^4dxdt &\le& 8\sup_{0\leq t\leq \bar{T}}E(u(t,\cdot))\int_0^{\bar{T}}\int_{B_1}\left(|\nabla\phi|^2|\nabla u|^2+|\nabla^2 u|^2\phi^2\right)dxdt\\
&\le& 10^8\sup_{0\le t\le\bar{T}}E(u(t,\cdot))^2.
\end{eqnarray*}
\end{proof}
Now we are ready to prove the interior gradient estimate:
\begin{lem}
\label{1stLem}
Under the assumption of Lemma \ref{5thLem}, there exist $\epsilon_1\in (0,\eps]$ and $C_1>0$, depending only on $M$, so that: if $E(u(t,\cdot))\leq \epsilon_1$ for $a.e.\ t\in[0,\bar{T}]$,
then 
\begin{equation}
\label{gradientEqn}
|\nabla u|^2(1,0,0)\leq C_1\sup_{0\le t\le\bar{T}}E(u(t,\cdot)).
\end{equation}
\end{lem}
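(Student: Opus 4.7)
The strategy is to derive a Bochner-type differential inequality for the energy density $e := |\nabla u|^2$ and then upgrade the integral bounds of Lemma~\ref{5thLem} to a pointwise bound at $(1,0,0)$ by a parabolic Moser iteration, in the spirit of Struwe \cite{St1}.

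The first step exploits the interior regularity obtained in Lemma~\ref{5thLem}, namely $u(t,\cdot)\in H^2(B_{1/2})$ for a.e.\ $t$ and $u\in L^2([0,\bar{T}],H^2(B_{1/4}))$. Approximating by smooth maps, differentiating $u_t = \Delta u - A_u(\nabla u,\nabla u)$ in each space variable, pairing with $\partial_k u$ and summing over $k$, one obtains after using the two orthogonality relations $\partial_k u \perp A_u(\nabla u,\nabla u)$ and $u_t \perp A_u(\nabla u,\nabla u)$ the distributional Bochner identity
\begin{equation*}
(\partial_t - \Delta)e + 2|\nabla^2 u|^2 \,=\, 2|A_u(\nabla u,\nabla u)|^2 \,\le\, c(M)\,e^2
\end{equation*}
on $(0,\bar{T})\times B_{1/4}$. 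In particular $e$ is a weak subsolution of the linear parabolic inequality $(\partial_t - \Delta)e \le c(M)\,g\cdot e$ with coefficient $g := e$.

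The second step is to run Moser iteration on this inequality. Lemma~\ref{5thLem} controls $\|g\|_{L^2((0,\bar{T})\times B_{1/4})}^2 = \int |\nabla u|^4 \le 10^8\,\eps_1\,\sup_t E(u(t,\cdot))$, which can be made as small as we wish by shrinking $\eps_1$. Since the parabolic dimension here is $4$, an $L^2_{t,x}$ coefficient sits exactly at the scale-invariant threshold where Moser iteration for subsolutions yields an $L^\infty$ mean value bound, and the smallness of $\|g\|_{L^2}$ is precisely what lets the reaction term be absorbed into the Caccioppoli–Sobolev estimates at each step of the iteration. Fixing a parabolic cylinder $P_r(1,0) := (1-r^2,1]\times B_r(0) \subset (0,\bar{T})\times B_{1/4}$ (for instance $r=1/4$, which is admissible since $\bar{T}>1$), the iteration produces a mean value inequality of the form
\begin{equation*}
|\nabla u|^2(1,0,0) \,\le\, C(M)\int_{P_r(1,0)}|\nabla u|^2\,dxdt \,\le\, C_1(M)\,\sup_{0\le t\le \bar{T}} E(u(t,\cdot)),
\end{equation*}
which is exactly (\ref{gradientEqn}) after renaming constants.

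The main obstacle is the absorption step in the Moser cascade. Because the ``coefficient'' $g$ in the linearised inequality is itself the quantity $e$ one is trying to bound, one has to track carefully how each Caccioppoli estimate depends on $\|g\|_{L^2}$ and then choose $\eps_1$ small enough (compared to $\eps$ from Lemma~\ref{5thLem} and to the structural constants $c(M)$) so that the product of constants arising across the iteration stays uniformly bounded. This is the point at which $\eps_1$ is fixed, and it is the standard $\eps$-regularity mechanism employed by Struwe in \cite{St1}.
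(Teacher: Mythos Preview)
Your plan is sound and is the classical $\eps$-regularity route, but the paper takes a different path to the same endpoint. Both arguments finish with the Bochner formula for $e=|\nabla u|^2$; the difference is how one earns the right to use it. You propose to run Moser iteration directly on $(\partial_t-\Delta)e\le c(M)\,e^2$, absorbing the critical $L^2_{t,x}$ coefficient by smallness of $\eps_1$. The paper instead first bootstraps regularity: it uses time-difference quotients $D^hu$ together with Lemma~\ref{EleGeo} to prove a \emph{pointwise-in-time} bound $\int_{B_{1/8}}|u_t|^2(t,\cdot)\le C(t^{-1}+1)\sup_s E(u(s,\cdot))$, feeds this into the elliptic $H^2$ estimate to get $\nabla u(t,\cdot)\in L^p$ for all $p<\infty$ uniformly in $t$, and then invokes $L^p$ parabolic theory to place $u$ in $W^{1,p}_{t,x}$ on an interior cylinder. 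Only after this does the paper write down the Bochner equation and apply the parabolic local maximum principle (Lieberman), rather than a Moser scheme.

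The trade-off is this: your route is shorter, but with only $u\in L^2_tH^2_x$ and $u_t\in L^2_{t,x}$ from Lemma~\ref{5thLem} the Bochner inequality is not yet available in a form you can legally test against $\eta^2 e^{2\beta-1}$ (for instance $\nabla e\in L^2_{t,x}$ is not known a priori, and $\partial_t e$ involves $\nabla u_t$). One has to Steklov-average in time or carry the iteration through difference quotients, which is precisely the technical work the paper front-loads by securing $W^{1,p}$ regularity first. So your ``approximating by smooth maps'' line is hiding real content; the paper's detour through $L^\infty_tL^2_x$ control of $u_t$ buys a setting in which every subsequent manipulation is transparent.
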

\begin{proof}
We will follow the suggestion in the remark after Lemma 3.10 in \cite{St1} to obtain the interior gradient estimate for $u$. Let $\phi$ be a smooth cut-off function, which is one in $B_{1/8}$, compactly supported in $B_{1/4}$, $0\le \phi\le 1$ and $|\nabla \phi|\le 16$.
Also, we define $D^hw(t,x)=(w(t+h,x)-w(t,x))/h$ for $0<h<h_0\ll 1$, where $w$ takes value in $\Real$ or $\Real^N$. Thus, for $0<t_1\leq t_2\leq \bar{T}-h_0$, using equation (\ref{2ndEqn}) and integration by parts, we get
\begin{eqnarray*}
&&\int_{t_1}^{t_2}\int_{B_1}\partial_t|D^h u|^2\phi^2dxdt
+2\int_{t_1}^{t_2}\int_{B_1}|\nabla D^hu|^2\phi^2dxdt\\
&\le& 4\int_{t_1}^{t_2}\int_{B_1}|D^hu||\phi||\nabla D^hu||\nabla\phi|dxdt+2h^{-1}\int_{t_1}^{t_2}\int_{B_1}\la A_{u}(\nabla u,\nabla u),D^hu\ra\phi^2 dxdt\\
&&-2h^{-1}\int_{t_1}^{t_2}\int_{B_1}\la A_{u(t+h,x)}(\nabla u,\nabla u),D^hu\ra\phi^2dxdt\\
&\le& 4\int_{t_1}^{t_2}\int_{B_1}|D^hu||\phi||\nabla D^hu||\nabla\phi|dxdt+\lam_1\int_{t_1}^{t_2}\int_{B_1}
|D^hu|^2|\nabla u|^2(t,x)\phi^2dxdt\\
&&+\lam_1\int_{t_1}^{t_2}\int_{B_1}
|D^hu|^2|\nabla u|^2(t+h,x)\phi^2dxdt\\
&\le& \int_{t_1}^{t_2}\int_{B_1}|\nabla D^hu|^2\phi^2dxdt+4\int_{t_1}^{t_2}\int_{B_1}
|D^hu|^2|\nabla\phi|^2dxdt\\
&& +\lam_1\int_{t_1}^{t_2}\int_{B_1}|D^hu|^2
\left(|\nabla u|^2(t,x)+|\nabla u|^2(t+h,x)\right)\phi^2dxdt,
\end{eqnarray*}
where $\lam_1=C\sup_M|A|$, and we use (\ref{GeoFact}) and Lemma \ref{EleGeo} in the second inequality. Thus, absorbing the first term on the right hand in the left, gives
\begin{equation}
\label{13rdEqn}
\begin{split}
&\quad \int_{\{t_2\}\times B_1}|D^hu|^2\phi^2dx+\int_{t_1}^{t_2}\int_{B_1}|\nabla D^hu|^2\phi^2dxdt\\
&\le \int_{\{t_1\}\times B_1}|D^hu|^2\phi^2dx+4\int_{t_1}^{t_2}\int_{B_1}|D^hu|^2
|\nabla\phi|^2dxdt\\
&\quad +\lam_1\int_{t_1}^{t_2}\int_{B_1}|D^hu|^2
\left(|\nabla u|^2(t,x)+|\nabla u|^2(t+h,x)\right)\phi^2dxdt.
\end{split}
\end{equation}
By Lemma 6.7 in Chapter III of \cite{St2}, we get
\begin{eqnarray*}
&&\int_{t_1}^{t_2}\int_{B_1}|D^hu|^4\phi^4dxdt\\
&\le& 8\sup_{t_1\le t\le t_2}\int_{\{t\}\times B_1}|D^hu|^2\phi^2dx\cdot
\int_{t_1}^{t_2}\int_{B_1}\left(|D^hu|^2|\nabla\phi|^2+|\nabla D^hu|^2\phi^2\right)dxdt.
\end{eqnarray*}
If $8\lam_1\sup_{0\le t\le \bar{T}}E(u(t,\cdot))<10^{-4}$, then, by Lemma \ref{5thLem} and H\"{o}lder's inequality, we have
\begin{eqnarray*}
&&\lam_1\int_{t_1}^{t_2}\int_{B_1}|D^hu|^2\left(|\nabla u|^2(t,x)+|\nabla u|^2(t+h,x)\right)\phi^2dxdt\\
&\leq& 2\lam_1\left(\int_{t_1}^{t_2}\int_{B_1}|D^hu|^4\phi^4dxdt\right)^{\frac{1}{2}}
\left(\int_{t_1}^{t_2}\int_{B_{1/4}}\left(|\nabla u|^4(t,x)+|\nabla u|^4(t+h,x)\right)dxdt\right)^{\frac{1}{2}}\\
&\le& \left(\sup_{t_1\le t\le t_2}\int_{\{t\}\times B_1}|D^hu|^2\phi^2dx\right)^{\frac{1}{2}}
\left(\int_{t_1}^{t_2}\int_{B_1}\left(|D^hu|^2|\nabla\phi|^2
+|\nabla D^hu|^2\phi^2\right)dxdt\right)^{\frac{1}{2}}\\
&\le& \frac{1}{2}\sup_{t_1\le t\le t_2}\int_{\{t\}\times B_1}|D^hu|^2\phi^2dx
+\frac{1}{2}\int_{t_1}^{t_2}\int_{B_1}\left(|D^hu|^2|\nabla\phi|^2
+|\nabla D^hu|^2\phi^2\right)dxdt.
\end{eqnarray*}
Hence, (\ref{13rdEqn}) gives
\begin{equation}
\label{22thEqn}
\begin{split}
&\quad\quad \int_{\{t_2\}\times B_1}|D^hu|^2\phi^2dx-\int_{\{t_1\}\times B_1}|D^hu|^2\phi^2dx\\
&\le\quad \frac{9}{2}\int_{t_1}^{t_2}\int_{B_1}|D^hu|^2
|\nabla\phi|^2dxdt+\frac{1}{2}\sup_{t_1\le t\le t_2}\int_{\{t\}\times B_1}|D^hu|^2\phi^2dx.
\end{split}
\end{equation}
We conclude from (\ref{22thEqn}) that
\begin{eqnarray*}
\int_{\{t_2\}\times B_1}|D^hu|^2\phi^2dx
&\le& 2\inf_{0\le t\le t_2}\int_{\{t\}\times B_1}|D^hu|^2\phi^2dx+\lam_2\int_0^{t_2}\int_{B_{1/4}}|D^hu|^2dxdt,
\end{eqnarray*}
where $\lam_2>0$ is a universal constant. Therefore,
\begin{equation}
\label{7thEqn}
\int_{\{t_2\}\times B_1}|D^hu|^2\phi^2dx\leq 2(t_2^{-1}+\lam_2)\int_0^{t_2}\int_{B_{1/4}}|D^hu|^2dxdt.
\end{equation} 
Note that $u$ is in $H^1([0,\bar{T}]\times B_1)$ and thus
\begin{equation}
\int_0^{\bar{T}-h_0}\int_{B_1}|D^hu-u_t|^2dxdt\le\int_0^1\int_0^{\bar{T}-h_0}\int_{B_1}
|u_t(t+sh,x)-u_t(t,x)|^2 dxdtds.
\end{equation}
Since elements in $L^2$ are continuous in the mean, thus
\begin{equation}
\label{8thEqn}
\lim_{h\To 0}\int_{0}^{\bar{T}-h_0}\int_{B_1}|D^hu-u_t|^2dxdt=0.
\end{equation}
Therefore, letting $h\To 0$, by the arbitrariness of $h_0$, (\ref{7thEqn}) and (\ref{10thEqn}), we conclude that for $a.e.\ t$,
\begin{equation}
\label{9thEqn}
\int_{\{t\}\times B_1}|u_t|^2\phi^2dx\leq 10^4(t^{-1}+\lam_2)\sup_{0\le s\le \bar{T}}E(u(s,\cdot)).
\end{equation}
By the same argument used to derive (\ref{11thEqn}), for $\frac{1}{2}<t<\bar{T}$,
\begin{equation}
\int_{\{t\}\times B_1}|\nabla^2u|^2\phi^2dx\leq \lam_3\sup_{0\le s\le \bar{T}}E(u(s,\cdot))
\end{equation}
where $\lam_3>0$ is a universal constant. Hence, by the Sobolev embedding theorem, for $1<p<\infty$,
\begin{equation}
\int_{\frac{1}{2}}^{\bar{T}}\int_{B_{1/8}}|\nabla u|^pdxdt\leq \lam_4\sup_{0\le t\le \bar{T}}E(u(t,\cdot))^{\frac{p}{2}},
\end{equation}
where $\lam_4$ depends only on $p$. Thus, by inserting cut-off functions and the theorem on page 72 of \cite{Ha}, $|u_t|$ and $|\nabla^2u|$ are in $L^p([1/4,\bar{T}_1]\times B_{1/16})$ for $1<\bar{T}_1<\bar{T}$. Furthermore, using the Bochner formula and the Gauss equation, one can derive the evolution equation for $g=|\nabla u|^2$ (see page 128 of \cite{Ha}), that is,
\begin{equation}
\label{GradientEqn}
g_t-\Delta g= -2|\text{Hess}_u|^2+2\la A_u(u_{x_1},u_{x_1}),A_u(u_{x_2},u_{x_2})\ra
-2|A_u(u_{x_1},u_{x_2})|^2.
\end{equation} 
Thus, $g\in W^{1,p}([1/8,\bar{T}_2]\times B_{1/32})$ and $|\nabla^2g|\in L^p([1/8,\bar{T}_2]\times B_{1/32})$ for $1<\bar{T}_2<\bar{T}_1$.
Therefore, by the local maximum principle (see Theorem 7.36 in \cite{Lie}), 
\begin{equation}
g(1,0,0)=|\nabla u|^2(1,0,0)\le \lam_5\sup_{0\le t\le \bar{T}}E(u(t,\cdot)),
\end{equation}
where $\lam_5>0$ depends only on $M$, assuming that $E(u(t,\cdot))<\epsilon_1$ for $a.e.\ t$ and $\eps_1=\min\{\eps,10^{-4}\lam_1^{-1}/8\}$.
\end{proof}

\section{Proof of Theorem \ref{1stThm}}
In \cite{Fr2} and \cite{Fr3}, Freire first constructed the optimal tangent frames for each fixed time and rewrote equation (\ref{2ndEqn}) under these frames. Next, he used the parabolic perturbation argument to show that any weak solution $u\in H^1([0,T]\times B_1,M)$ of (\ref{HMHFEqn}), satisfying that $E(u(t,\cdot))\le E(u_0)$ for $a.e.$ $t\in [0,T]$, is in $V^{T'}$ for some $T'\in (0,T)$, given initial data $u_0\in H^1(B_1,M)$ and boundary data $\gamma=u_0|_{\partial B_1}\in H^{3/2}(B_1,M)$; see Theorem 1.1 in \cite{Fr2}. Finally, combining with the results of Struwe and Chang in \cite{St1} and \cite{Ch}, he concluded the uniqueness for energy non-increasing weak solutions of (\ref{HMHFEqn}) on $[0,T]\times B_1$ by iteration. However, without the assumption on boundary regularity, the proof by Freire does not apply to our case (as explained in the second paragraph of Introduction). Instead, we make use of the interior gradient estimate and Hardy's inequality to show the uniqueness for energy non-increasing weak solutions of (\ref{HMHFEqn}) in $H^1([0,T]\times B_1,M)$ with initial data $u_0\in H^1(B_1,M)$ and boundary data $\gamma=u_0|_{\partial B_1}$. 

We start with showing Hardy's inequality for the unit open disk. This turns out to be the other key ingredient. Such Hardy's inequality also holds for general domains in $\Real^2$; see \cite{Ne}.
\begin{lem}
\label{2ndLem}
For $h\in H^1_0(B_1,\Real)$,
\begin{equation}
\label{HardyEqn}
\int_{B_1}\frac{h^2}{(1-\sqrt{x_1^2+x_2^2})^2}dx\leq 4\int_{B_1}|\nabla h|^2dx.
\end{equation}
\end{lem}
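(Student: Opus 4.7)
The strategy is to run the standard weighted-Hardy argument via integration by parts, using a single radial auxiliary function $\phi$ on $B_1$ whose Laplacian pointwise dominates $(1-|x|)^{-2}$ and whose gradient has length exactly $(1-|x|)^{-1}$.  A natural candidate is $\phi(x)=-\log(1-|x|)$.  Writing $r=|x|$ and using the two-dimensional formula $\Delta = \partial_r^2 + r^{-1}\partial_r$ on radial functions, one finds $\phi'(r)=(1-r)^{-1}$, $|\nabla\phi|^2=(1-r)^{-2}$, and
$$\Delta\phi \;=\; \frac{1}{(1-r)^2} + \frac{1}{r(1-r)} \;\ge\; \frac{1}{(1-r)^2}.$$

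First I would prove (\ref{HardyEqn}) for $h\in C^\infty_c(B_1)$.  Since such $h$ is compactly supported inside $B_1$, the divergence theorem applied to $h^2\nabla\phi$ produces no boundary term, and the pointwise bound above gives
$$\int_{B_1}\frac{h^2}{(1-|x|)^2}\,dx \;\le\; \int_{B_1} h^2\,\Delta\phi\,dx \;=\; -2\int_{B_1} h\,\nabla h\cdot\nabla\phi\,dx.$$
The Cauchy--Schwarz inequality together with $|\nabla\phi|^2=(1-|x|)^{-2}$ bounds the right-hand side by $2\left(\int_{B_1} h^2(1-|x|)^{-2}\,dx\right)^{1/2}\left(\int_{B_1}|\nabla h|^2\,dx\right)^{1/2}$, and since the weighted $L^2$ integral is finite for $h\in C^\infty_c$, dividing through and squaring yields the constant $4$ in (\ref{HardyEqn}).

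To pass to general $h\in H^1_0(B_1)$ I would approximate $h$ in the $H^1$ norm by a sequence $h_n\in C^\infty_c(B_1)$, extract a subsequence converging almost everywhere, apply the $C^\infty_c$ case to each $h_n$, and take limits using Fatou's lemma on the weighted $L^2$ integral and the $H^1$-convergence on the Dirichlet integral.  The only mildly delicate point is that $\phi$ blows up at $\partial B_1$, but this causes no trouble at any stage because every $h_n$ has compact support in $B_1$, so all integrals that actually appear in the argument are automatically finite and no further regularization of $\phi$ is needed.
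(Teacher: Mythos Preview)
Your argument is correct and, once unpacked in polar coordinates, is literally the paper's proof: integrating $\operatorname{div}(h^{2}\nabla\phi)$ with $\phi=-\log(1-r)$ reproduces exactly the paper's integration-by-parts identity (the nonnegative term you discard, $\int h^{2}\bigl(\Delta\phi-(1-r)^{-2}\bigr)\,dx=\int_{0}^{2\pi}\!\int_{0}^{1} h^{2}(1-r)^{-1}\,dr\,d\theta$, is the same term the paper drops), followed by the same Cauchy--Schwarz step and the identical density-plus-Fatou extension to $H^{1}_{0}$. The only point worth tidying is that $\phi$ is not $C^{2}$ at the origin, but since $|\nabla\phi|$ stays bounded there the boundary contribution from an excised disk $B_{\epsilon}$ is $O(\epsilon)$ and the divergence theorem goes through.
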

\begin{proof}
First, we prove the lemma for $h\in C^\infty_c(B_1,\Real)$. Rewriting the left hand of inequality (\ref{HardyEqn}) in polar coordinates and using integration by parts, we get
\begin{eqnarray*}
&&\int_{B_1}\frac{h^2}{(1-\sqrt{x_1^2+x_2^2})^2}dx
=\int_0^1\int_0^{2\pi}\frac{h^2r}{(1-r)^2}d\theta dr\\
&=&-\int_0^1\int_0^{2\pi}\frac{h^2}{1-r}d\theta dr-\int_0^1\int_0^{2\pi}\frac{2hh_rr}{1-r}d\theta dr\\
&\le& 2\left(\int_0^1\int_0^{2\pi}\frac{h^2r}{(1-r)^2}d\theta dr\right)^{\frac{1}{2}}
\left(\int_0^1\int_0^{2\pi}h_r^2rd\theta dr\right)^{\frac{1}{2}}\\
&\le& 2\left(\int_{B_1}\frac{h^2}{(1-\sqrt{x_1^2+x_2^2})^2}dx\right)^{\frac{1}{2}}
\left(\int_{B_1}|\nabla h|^2dx\right)^{\frac{1}{2}}.
\end{eqnarray*}
Thus, inequality (\ref{HardyEqn}) follows by absorbing the second term of the product on the right hand side in the left. 

Since $C^\infty_c(B_1)$ is dense in $H^1_0(B_1)$, there exists a sequence of $h_n\in C^\infty_c(B_1,\Real)$ such that $h_n\To h$ in $H^1(B_1)$ topology and $h_n\To h$ $a.e.$ in $B_1$. By Fatou's Lemma (see Theorem 3 on page 648 of \cite{E}),
\begin{eqnarray*}
\int_{B_1}\frac{h^2}{(1-\sqrt{x_1^2+x_2^2})^2}dx&\le&\liminf_{n\To\infty}
\int_{B_1}\frac{h_n^2}{(1-\sqrt{x_1^2+x_2^2})^2}dx\\
&\le&\liminf_{n\To\infty}
4\int_{B_1}|\nabla h_n|^2dx=4\int_{B_1}|\nabla h|^2dx.
\end{eqnarray*}
\end{proof}
Next, to avoid repeating computation in section 5, we will prove a general stability lemma below, i.e. Lemma \ref{3rdLem}. Suppose that $u$ and $v$ are weak solutions of (\ref{HMHFEqn}) in $H^1([0,T]\times B_1,M)$ satisfying that the energy is non-increasing and with initial data $u_0$ and $v_0$ respectively. For this moment, $u_0$ may not be equal to $v_0$. And let $\eps_2=\min\{\eps_1,C^{-1}C_1^{-1}\sup_M|A|/32\}$. 

The key to the proof of Lemma \ref{3rdLem} is to bound the $L^2$ inner products $\la|\nabla u|^2,h^2\ra_{L^2}$ and $\la|\nabla v|^2,h^2\ra_{L^2}$ on $B_1$, for $\forall h\in H^1_0(B_1)$. Such integrals arise from the non-linear terms $A_u(\nabla u,\nabla u)$ and $A_v(\nabla v,\nabla v)$ in equation (\ref{2ndEqn}). First, by the energy non-increasing assumption and Lemma \ref{1stLem}, we can bound $|\nabla u|$ and $|\nabla v|$ for $x_0\in B_1$ and small time $t_0>0$. Namely, since the energy of $u(t,\cdot)$ is non-increasing in time, $u(t,\cdot)\To u_0$ weakly in $H^1(B_1)$ and strongly in $L^2(B_1)$, as $t\To 0$. Then,
\begin{eqnarray*}
&&\lim_{t\To 0}\int_{B_1}|\nabla u(t,x)-\nabla u_0|^2dx\\
&=&\lim_{t\To 0}\int_{B_1}|\nabla u(t,x)|^2dx-\int_{B_1}2\la \nabla u(t,x),\nabla u_0\ra dx+\int_{B_1}|\nabla u_0|^2dx\\
&\le&0.
\end{eqnarray*}
Thus, $u(t,\cdot)\To u_0$ strongly in $H^1(B_1)$, and by the same argument, $v(t,\cdot)\To v_0$ strongly in $H^1(B_1)$, as $t\To 0$.
Hence, by the absolute continuity of integration, there exist $R_0>0$ and $T'\in (0,\min\{R_0^2,T\}]$  so that, for $x_0\in B_1$ and $t\in [0,T']$,
\begin{equation}
\frac{1}{2}\int_{\{t\}\times (B_{R_0}(x_0)\cap B_1)}|\nabla u|^2dx<\eps_2 \quad\text{and}\quad \frac{1}{2}\int_{\{t\}\times (B_{R_0}(x_0)\cap B_1)}|\nabla v|^2dx<\eps_2.
\end{equation}
Note that equation (\ref{2ndEqn}) is invariant under the transformation $(t,x)\To(\lam^2 t,\lam x)$ for $\lam>0$, and the energy is invariant under conformal transformations of domains in $\Real^2$. Fix $(t_0,x_0)\in (0,T')\times B_1$. Let $\lam=\min\{\sqrt{t_0},1-|x_0|\}$. Define $u_{\lam}(s,y)=u(\lam^2s,x_0+\lam y)$ and $v_{\lam}(s,y)=v(\lam^2s,x_0+\lam y)$. Thus, $u_{\lam}$ and $v_{\lam}$ satisfy equation (\ref{2ndEqn}) on $(0,\lam^{-2}T')\times B_1$, and for $s\in [0,\lam^{-2}T']$, $E(u_{\lam}(s,\cdot))<\eps_2$ and $E(v_{\lam}(s,\cdot))<\eps_2$. Hence, by Lemma \ref{1stLem}, for $(t_0,x_0)\in (0,T')\times B_1$,
\begin{equation}
\label{25thEqn} 
|\nabla u_{\lam}|^2(\lam^{-2}t_0,0,0)\le C_1\eps_2 \quad\text{and}\quad
|\nabla v_{\lam}|^2(\lam^{-2}t_0,0,0)\le C_1\eps_2.
\end{equation}
Therefore,
\begin{eqnarray}
\label{12thEqn}
|\nabla u|^2(t_0,x_0)&\le &C_1[t_0^{-1}+(1-|x_0|)^{-2}]\eps_2\\
\label{19thEqn}
|\nabla v|^2(t_0,x_0)&\le &C_1[t_0^{-1}+(1-|x_0|)^{-2}]\eps_2.
\end{eqnarray}
Then, combining inequalities (\ref{12thEqn}) and (\ref{19thEqn}) with Lemma \ref{2ndLem}, we can bound the $L^2$ inner products $\la|\nabla u|^2,h^2\ra_{L^2}$ and $\la|\nabla v|^2,h^2\ra_{L^2}$ on $B_1$, for $\forall h\in H^1_0(B_1)$ and $t\in (0,T')$. 
\begin{lem}
\label{3rdLem}
There exists $C_2>0$, depending only on $M$, so that:
\begin{equation}
\int_0^{T'}\int_{B_1}|\nabla u-\nabla v|^2t^{-\frac{1}{2}}dxdt+\frac{1}{2\sqrt{T'}}\int_{\{T'\}\times B_1}|u-v|^2dx\le N,
\end{equation}
where 
\begin{equation}
\begin{split}
N&=\left(\frac{1}{\sqrt{T'}}+2\sqrt{T'}\right)\int_{B_1}
\left(|w_0|^2 +|\nabla w_0|^2\right)dx\\
&\quad +8\sqrt{2T'(E(u_0)+E(v_0))}\left(\int_{B_1}|\nabla w_0|^2dx\right)^{\frac{1}{2}}\\
&\quad +4C_2\int_0^{T'}\int_{B_1}(|\nabla u|^2+|\nabla v|^2)(|w_0|^2+|w_0|)t^{-\frac{1}{2}}dxdt.
\end{split}
\end{equation}
\end{lem}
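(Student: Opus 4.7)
The plan is to introduce the auxiliary function
\begin{equation*}
\tilde w := u - v - w_0, \qquad w_0 := u_0 - v_0 \in H^1(B_1,\Real^N),
\end{equation*}
and use $\tilde w$ as a test function in the subtracted weak formulation (\ref{HMHFEqn2}) of $u$ and $v$. By construction $\tilde w(0,\cdot)=0$, and since $u,v$ have the boundary traces $u_0|_{\partial B_1}$ and $v_0|_{\partial B_1}$ the trace of $\tilde w(t,\cdot)$ on $\partial B_1$ vanishes for a.e.\ $t$; boundedness is automatic because $M$ is compact. Thus $\tilde w(t,\cdot)\in H^1_0\cap L^\infty(B_1,\Real^N)$ is admissible in (\ref{HMHFEqn2}).

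I would next multiply the resulting pointwise-in-$t$ identity by $t^{-1/2}$, integrate over $[0,T']$, expand $\nabla(u-v)=\nabla\tilde w+\nabla w_0$, and integrate the time-derivative term by parts in $t$. The boundary term at $t=0$ vanishes because $\tilde w(0)=0$ and $\tilde w\in H^1([0,T']\times B_1)$ together imply $\norm{\tilde w(t)}_{L^2}^2\le t\int_0^t\norm{\partial_s\tilde w}_{L^2}^2\,ds=o(\sqrt{t})$. This produces
\begin{equation*}
\frac{1}{2\sqrt{T'}}\norm{\tilde w(T')}_{L^2}^2+\frac{1}{4}\int_0^{T'}t^{-3/2}\norm{\tilde w}_{L^2}^2\,dt+\int_0^{T'}t^{-1/2}\norm{\nabla\tilde w}_{L^2}^2\,dt=-I_1-I_2,
\end{equation*}
where $I_1=\int_0^{T'}t^{-1/2}\int_{B_1}\la\nabla w_0,\nabla\tilde w\ra$ and $I_2=\int_0^{T'}t^{-1/2}\int_{B_1}\la A_u(\nabla u,\nabla u)-A_v(\nabla v,\nabla v),\tilde w\ra$.

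For $I_1$, Cauchy--Schwarz combined with $\norm{\nabla\tilde w}_{L^2}\le\norm{\nabla u}_{L^2}+\norm{\nabla v}_{L^2}+\norm{\nabla w_0}_{L^2}\le\sqrt{2E(u_0)}+\sqrt{2E(v_0)}+\norm{\nabla w_0}_{L^2}$ (using the energy non-increase hypothesis) and $\int_0^{T'}t^{-1/2}\,dt=2\sqrt{T'}$ yields a bound of the shape $C\sqrt{T'(E(u_0)+E(v_0))}\,\norm{\nabla w_0}_{L^2}+C\sqrt{T'}\norm{\nabla w_0}_{L^2}^2$, matching the middle term and part of the first term of $N$ without any absorption. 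For $I_2$, decompose $\tilde w=(u-v)-w_0$ and invoke the orthogonality trick: since $A_u(\nabla u,\nabla u)\perp T_uM$ by (\ref{GeoFact}), only the normal component of $u-v$ at $u$ contributes to $\la A_u(\nabla u,\nabla u),u-v\ra$, which is $O(|u-v|^2)$ by Lemma \ref{EleGeo}. Hence $|I_2|\le C\int_0^{T'}t^{-1/2}\int_{B_1}(|\nabla u|^2+|\nabla v|^2)(2|\tilde w|^2+2|w_0|^2+|w_0|)\,dx\,dt$, where the pieces containing $w_0$ fall inside the last term of $N$ once we take $C_2\ge C\sup_M|A|/2$.

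The main obstacle is the remaining ``bad'' term $\int t^{-1/2}(|\nabla u|^2+|\nabla v|^2)|\tilde w|^2$, which is genuinely borderline in two dimensions: $|\nabla u|^2$ lives only in $L^1$, ruling out standard Sobolev interpolation. The plan is to invoke the pointwise interior gradient estimates (\ref{12thEqn})--(\ref{19thEqn}) to dominate $|\nabla u|^2+|\nabla v|^2\le 2C_1\eps_2(t^{-1}+(1-|x|)^{-2})$ pointwise. The $t^{-1}$ piece then produces $\eps_2\int t^{-3/2}\norm{\tilde w}_{L^2}^2\,dt$, absorbable into the output of the time-derivative integration by parts, and the $(1-|x|)^{-2}$ piece, combined with Hardy's inequality (Lemma \ref{2ndLem}) applied componentwise to $\tilde w$ (permissible because $\tilde w|_{\partial B_1}=0$), produces $\eps_2\int t^{-1/2}\norm{\nabla\tilde w}_{L^2}^2\,dt$, absorbable into the $|\nabla\tilde w|^2$ term on the left. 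Both absorptions rely on the smallness built into the definition of $\eps_2$. Finally, I would pass from $\tilde w$ back to $u-v$ using $|\nabla(u-v)|^2\le 2|\nabla\tilde w|^2+2|\nabla w_0|^2$ and $|u-v|^2\le 2|\tilde w|^2+2|w_0|^2$; the extra $w_0$ contributions that appear are exactly absorbed by the $(1/\sqrt{T'}+2\sqrt{T'})\int(|w_0|^2+|\nabla w_0|^2)$ piece of $N$, completing the estimate.
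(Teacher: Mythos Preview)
Your approach is essentially the same as the paper's: the identical test function $\tilde w=w-w_0$, the $t^{-1/2}$ weight with integration by parts in $t$ (using the bound $\|\tilde w(t)\|_{L^2}^2\le t\int_0^t\|w_t\|_{L^2}^2$ to kill the boundary term at $t=0$), the orthogonality trick via (\ref{GeoFact}) and Lemma~\ref{EleGeo} for the nonlinear term, and then the pointwise gradient bounds (\ref{12thEqn})--(\ref{19thEqn}) combined with Hardy's inequality to absorb the borderline $(|\nabla u|^2+|\nabla v|^2)|\tilde w|^2$ piece using the smallness of $\eps_2$. The only cosmetic difference is that the paper writes $\int t^{-1/2}|\nabla w|^2=\int t^{-1/2}\la\nabla u,\nabla w\ra-\int t^{-1/2}\la\nabla v,\nabla w\ra$ and plugs $w-w_0$ into the weak formulations of $u$ and $v$ \emph{separately}, so that $|\nabla w|^2$ sits on the left from the start and no conversion from $|\nabla\tilde w|^2$ back to $|\nabla w|^2$ is needed; this makes it slightly easier to land on the exact numerical constants displayed in $N$, but the analytic content is the same.
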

\begin{proof}
Define $w=u-v$. It is clear that
\begin{equation}
\label{14thEqn}
\begin{split}
&\quad\ \int_0^{T'}\int_{B_1}|\nabla w|^2t^{-\frac{1}{2}}dxdt\\
&=\int_0^{T'}\int_{B_1}\la \nabla u,\nabla w\ra t^{-\frac{1}{2}}dxdt-\int_0^{T'}\int_{B_1}\la \nabla v,\nabla w\ra t^{-\frac{1}{2}}dxdt.
\end{split}
\end{equation}

We will estimate the first term of (\ref{14thEqn}) and the second term can be estimated similarly. First, by footnote 2, $w-w_0\in C^0([0,T],L^2(B_1))$ and the map $t\To \norm{w(t,\cdot)-w_0}_{L^2(B_1)}^2$ is absolutely continuous, with
\begin{eqnarray*}
\frac{d}{dt}\norm{w(t,\cdot)-w_0}_{L^2(B_1)}^2
&=&2\int_{\{t\}\times B_1}\la w_t,w-w_0\ra dx\\
&\le&2\left(\int_{\{t\}\times B_1}|w_t|^2dx\right)^{\frac{1}{2}}
\left(\int_{\{t\}\times B_1}|w-w_0|^2dx\right)^{\frac{1}{2}}
\end{eqnarray*}
for $a.e.\ t\in [0,T]$. Thus, for $\forall t_0\ge 0$, integrating over $[0,t_0]$ and by H\"{o}lder's inequality, we conclude that
\begin{eqnarray*}
\norm{w(t_0,\cdot)-w_0}_{L^2(B_1)}&\le&\int_0^{t_0}
\left(\int_{\{t\}\times B_1}|w_t|^2dx\right)^{\frac{1}{2}}dt\\
&\le&\sqrt{t_0}\left(\int_0^{t_0}\int_{B_1}|w_t|^2dxdt\right)^{\frac{1}{2}}.
\end{eqnarray*}
Therefore, 
\begin{eqnarray}
\label{17thEqn}
&&\int_0^{T'}\int_{B_1}|w-w_0|^2t^{-\frac{3}{2}}dxdt\le 2\sqrt{T'}\int_0^{T'}\int_{B_1}|w_t|^2dxdt<+\infty,\\
\label{18thEqn}
&&\lim_{t\To 0}t^{-\frac{1}{2}}\int_{\{t\}\times B_1}|w-w_0|^2dx=0.
\end{eqnarray} 
Next, (\ref{HMHFEqn2}) gives,
\begin{eqnarray*}
&&\int_0^{T'}\int_{B_1}\la \nabla u,\nabla w\ra t^{-\frac{1}{2}}dxdt\\
&=&\int_0^{T'}\int_{B_1}\la \nabla u, \nabla w-\nabla w_0\ra t^{-\frac{1}{2}}dxdt
+\int_0^{T'}\int_{B_1}\la \nabla u, \nabla w_0\ra t^{-\frac{1}{2}}dxdt\\
&=&\int_0^{T'}\int_{B_1}\la -u_t,w-w_0\ra t^{-\frac{1}{2}}dxdt+\int_0^{T'}\int_{B_1}\la \nabla u,\nabla w_0\ra t^{-\frac{1}{2}}dxdt\\
&&-\int_0^{T'}\int_{B_1}\la A_u(\nabla u,\nabla u),w-w_0\ra t^{-\frac{1}{2}}dxdt\\
&\le&\int_0^{T'}\int_{B_1}\la -u_t,w-w_0\ra t^{-\frac{1}{2}}dxdt+2\sqrt{2T'E(u_0)}\left(\int_{B_1}|\nabla w_0|^2\right)^{\frac{1}{2}}\\
&&-\int_0^{T'}\int_{B_1}\la A_u(\nabla u,\nabla u),w-w_0\ra t^{-\frac{1}{2}}dxdt.
\end{eqnarray*}
We will bound the third term from above. In the following calculation, energy non-increasing condition and (\ref{17thEqn}) guarantee that each quantity below is finite. Since $A_u(\nabla u,\nabla u)$ is perpendicular to $M$ at $u$ and $w=u-v$, we can apply Lemma \ref{EleGeo} to the $L^2$ inner product of $A_u(\nabla u,\nabla u)$ and $w$ on $B_1$. Note that $w-w_0=(u-u_0)-(v-v_0)\in H^1_0(B_1)$ for $a.e.$ $t$ fixed. Thus, we can apply the interior gradient estimate (\ref{12thEqn}) and Lemma \ref{2ndLem} to the $L^2$ inner product of $|\nabla u|^2$ and $|w-w_0|^2$ on $B_1$. Hence,
\begin{eqnarray*}
&&-\int_0^{T'}\int_{B_1}\la A_u(\nabla u,\nabla u),w-w_0\ra t^{-\frac{1}{2}}dxdt\\
&\le& \lam_1\int_0^{T'}\int_{B_1}|\nabla u|^2|w|^2t^{-\frac{1}{2}}dxdt
+\lam_2\int_0^{T'}\int_{B_1}|\nabla u|^2|w_0|t^{-\frac{1}{2}}dxdt\\
&\le& \lam_1\int_0^{T'}\int_{B_1}|\nabla u|^2|w-w_0|^2t^{-\frac{1}{2}}dxdt
+\lam_2\int_0^{T'}\int_{B_1}|\nabla u|^2(|w_0|^2+|w_0|)t^{-\frac{1}{2}}dxdt\\
\end{eqnarray*}
\begin{eqnarray*}
&\le&\lam_1C_1\eps_2\int_0^{T'}\int_{B_1}|w-w_0|^2t^{-\frac{3}{2}}dxdt
+4\lam_1C_1\eps_2\int_0^{T'}\int_{B_1}|\nabla w-\nabla w_0|^2t^{-\frac{1}{2}}dxdt\\
&&+\lam_2\int_0^{T'}\int_{B_1}|\nabla u|^2(|w_0|^2+|w_0|)t^{-\frac{1}{2}}dxdt,
\end{eqnarray*}
where $\lam_1=C\sup_M|A|$ and $\lam_2>0$ (changing from line to line in the computation above) depends only on M. The previous two inequalities and $32\lam_1C_1\eps_2<1$ give,
\begin{equation}
\label{20thEqn}
\begin{split}
&\quad\ \int_0^{T'}\int_{B_1}\la \nabla u,\nabla w\ra t^{-\frac{1}{2}}dxdt\\ 
&\le \int_0^{T'}\int_{B_1}\la -u_t,w-w_0\ra t^{-\frac{1}{2}}dxdt+\frac{1}{32}\int_0^{T'}\int_{B_1}
|w-w_0|^2t^{-\frac{3}{2}}dxdt\\
&\quad\ +\frac{1}{4}\int_0^{T'}\int_{B_1}|\nabla w|^2t^{-\frac{1}{2}}dxdt+N_1,
\end{split}
\end{equation} 
where
\begin{equation*}
\begin{split}
N_1&=\lam_2\int_0^{T'}\int_{B_1}
(|\nabla u|^2+|\nabla v|^2)(|w_0|^2+|w_0|)t^{-\frac{1}{2}}dxdt\\
&\quad\  +2\sqrt{2T'(E(u_0)+E(v_0))}\left(\int_{B_1}|\nabla w_0|^2\right)^{\frac{1}{2}}+\frac{\sqrt{T'}}{2}\int_{B_1}|\nabla w_0|^2dx.
\end{split}
\end{equation*}

Similarly,
\begin{equation}
\label{21thEqn}
\begin{split}
&\quad\ -\int_0^{T'}\int_{B_1}\la \nabla v,\nabla w\ra t^{-\frac{1}{2}}dxdt\\
&\le \int_0^{T'}\int_{B_1}\la v_t,w-w_0\ra t^{-\frac{1}{2}}dxdt+\frac{1}{32}\int_0^{T'}\int_{B_1}
|w-w_0|^2t^{-\frac{3}{2}}dxdt\\
&\quad\ +\frac{1}{4}\int_0^{T'}\int_{B_1}|\nabla w|^2t^{-\frac{1}{2}}dxdt+N_1.
\end{split}
\end{equation}

Thus, combining (\ref{20thEqn}) and (\ref{21thEqn}), we get
\begin{eqnarray*}
&&\int_0^{T'}\int_{B_1}|\nabla w|^2t^{-\frac{1}{2}}dxdt\\
&\le& -\int_0^{T'}\int_{B_1}\la w_t,w-w_0\ra t^{-\frac{1}{2}}dxdt+\frac{1}{16}\int_0^{T'}\int_{B_1}|w-w_0|^2t^{-\frac{3}{2}}dxdt\\
&&+\frac{1}{2}\int_0^{T'}\int_{B_1}|\nabla w|^2t^{-\frac{1}{2}}dxdt+2N_1\\
&\le&-\frac{1}{2\sqrt{T'}}\int_{\{T'\}\times B_1}|w-w_0|^2dx+\frac{1}{2}\int_0^{T'}\int_{B_1}|\nabla w|^2t^{-\frac{1}{2}}dxdt+2N_1,
\end{eqnarray*}
where the last inequality follows from integration by parts and (\ref{18thEqn}). Therefore,
\begin{equation}
\begin{split}
&\quad\ \int_0^{T'}\int_{B_1}|\nabla w|^2t^{-\frac{1}{2}}dxdt+\frac{1}{2\sqrt{T'}}\int_{\{T'\}\times B_1}|w|^2dx\\
&\le \frac{1}{\sqrt{T'}}\int_{B_1}|w_0|^2dx+4N_1,
\end{split}
\end{equation}
and $C_2=\lam_2$ in the lemma.
\end{proof}
Now, under the condition of Theorem \ref{1stThm}, that is, $u_0=v_0$, we have $N\equiv 0$ and thus $u(t,\cdot)=v(t,\cdot)$ in $L^2(B_1)$ for each $t\in [0,T']$. Therefore, it follows from an open and closed argument that  $u=v$ $a.e.$ on $[0,T]\times B_1$.

\section{Proof of Theorem \ref{2ndThm}}
Let $u$ be the weak solution of (\ref{HMHFEqn}) in $\cap_{T>0}H^1([0,T]\times B_1,M)$ with initial data $u_0\in H^1(B_1,M)$ and boundary data $\gamma=u_0|_{\partial B_1}$. First, define $E=\sup_{t\ge 0}E(u(t,\cdot))$. If $E<\eps_1$, then, by the similar argument used to obtain (\ref{12thEqn}), we get
\begin{equation}
\label{GradientSEnergy}
|\nabla u|^2(t_0,x_0)\le C_1E\max\{t_0^{-1}, (1-|x_0|)^{-2}\}\le C_1E\left[t_0^{-1}+(1-|x_0|)^{-2}\right],
\end{equation}
for $t_0>0$ and $x_0\in B_1$.

Second, we derive two estimates of the kinetic energy: one holds for $a.e.$ $t_0>0$ and the other holds for $t_0$ large enough. 
\begin{lem}
\label{4thLem}
There exists $\eps_3\in (0,\eps_1)$, depending only on $M$, so that: if $E\le\eps_3$, then for $a.e.$ $t_0>0$,
\begin{equation}
\label{15thEqn}
\int_{\{t_0\}\times B_1}|u_t|^2dx\le \frac{4}{t_0}\int_0^{t_0}\int_{B_1}|u_t|^2dxdt,
\end{equation}
and there exist $T_1>0$, $\alpha_1>0$ and $C_3>0$ such that for $a.e.$ $t_0\ge T_1$,
\begin{equation}
\label{16thEqn}
\int_{\{t_0\}\times B_1}|u_t|^2dx\le C_3\exp[-\alpha_1 (t-T_1)].
\end{equation}
\end{lem}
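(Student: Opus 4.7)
The plan is to differentiate the equation in $t$ via the difference quotients $D^hu:=h^{-1}(u(t+h,\cdot)-u(t,\cdot))$, following the approach of Lemma \ref{1stLem}, but now without a spatial cutoff. The key observation is that the boundary data $\gamma$ is time-independent, so $D^hu$ vanishes on $\partial B_1$ and lies in $H_0^1(B_1)$ for a.e.\ $t$. This permits Hardy's inequality (Lemma \ref{2ndLem}) to tame the singularity in (\ref{GradientSEnergy}) at $\partial B_1$ and, later, Poincar\'e's inequality to produce the exponential decay in (\ref{16thEqn}).

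Running the derivation of (\ref{13rdEqn}) with $\phi\equiv 1$ (boundary integrals from integration by parts in $x$ vanish, and no cutoff-gradient term appears), using (\ref{GeoFact}) and Lemma \ref{EleGeo}, yields
\[
F^h(t_2)-F^h(t_1)+2\int_{t_1}^{t_2}\!\!\int_{B_1}|\nabla D^hu|^2\,dxdt\le \lam_1\int_{t_1}^{t_2}\!\!\int_{B_1}|D^hu|^2\left(|\nabla u|^2(t)+|\nabla u|^2(t+h)\right)dxdt,
\]
where $F^h(t):=\int_{B_1}|D^hu|^2\,dx$ and $\lam_1=C\sup_M|A|$. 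Applying (\ref{GradientSEnergy}) pointwise and Lemma \ref{2ndLem} to $D^hu$,
\[
\int_{B_1}|D^hu|^2|\nabla u|^2(t)\,dx\le C_1 E\left(t^{-1}F^h(t)+4\int_{B_1}|\nabla D^hu|^2\,dx\right).
\]
Choosing $\eps_3$ so that $16\lam_1 C_1\eps_3<1$ permits absorption of the $|\nabla D^hu|^2$ term on the left. In differential form,
\[
\frac{d}{dt}F^h(t)+\int_{B_1}|\nabla D^hu|^2\,dx\le \alpha\,t^{-1}F^h(t),\qquad \alpha:=2\lam_1 C_1 E<\tfrac{1}{8}.
\]

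For (\ref{15thEqn}), drop the dissipation term: the inequality rewrites as $(t^{-\alpha}F^h)'\le 0$, so $F^h(t_1)\ge F^h(t_0)(t_1/t_0)^\alpha$ whenever $t_1\le t_0$. Integrating in $t_1$ over $[0,t_0]$ gives $F^h(t_0)\le (\alpha+1)t_0^{-1}\int_0^{t_0}F^h(s)\,ds\le 4\,t_0^{-1}\int_0^{t_0}F^h(s)\,ds$. For (\ref{16thEqn}), apply Poincar\'e's inequality ($D^hu\in H_0^1(B_1)$ for a.e.\ $t$): $F^h(t)\le C_P\int_{B_1}|\nabla D^hu|^2\,dx$, which feeds back into the differential inequality to give
\[
\frac{d}{dt}F^h(t)+\left(C_P^{-1}-\alpha/t\right)F^h(t)\le 0.
\]
Setting $T_1:=2C_P\alpha$, the coefficient is bounded below by $(2C_P)^{-1}$ for $t\ge T_1$, so Gronwall yields $F^h(t)\le F^h(T_1)\exp(-(t-T_1)/(2C_P))$. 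Finiteness of $F(T_1)$ follows from the energy non-increasing hypothesis (which forces $\int_0^\infty F\le E(u_0)$) combined with (\ref{15thEqn}): $F(T_1)\le 4E(u_0)/T_1$. Thus $\alpha_1=(2C_P)^{-1}$ and $C_3=4E(u_0)/T_1$.

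The main obstacle is the passage $h\to 0$: since (\ref{8thEqn}) yields only space-time $L^2$ convergence of $D^hu$ to $u_t$, the pointwise-in-$t_0$ conclusions must be extracted along a subsequence for a.e.\ $t_0$, which is precisely the ``a.e.'' qualifier in (\ref{15thEqn}) and (\ref{16thEqn}). Otherwise the manipulations at the level of $D^hu$ (including the use of Lemma \ref{EleGeo} on the nonlinear term and the chain rule in $t$) proceed exactly as in the proof of Lemma \ref{1stLem}.
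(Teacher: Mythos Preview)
Your approach is essentially the paper's: difference quotients $D^hu\in H_0^1(B_1)$, the energy identity for $F^h(t)=\|D^hu(t)\|_{L^2}^2$, the pointwise gradient bound (\ref{GradientSEnergy}) combined with Hardy (Lemma \ref{2ndLem}) to absorb the nonlinearity, then Poincar\'e to force exponential decay. The minor variants---tracking the small coefficient $\alpha$ so that $(t^{-\alpha}F^h)'\le 0$ and integrating over all of $[0,t_0]$ rather than $[t_0/2,t_0]$, and choosing $T_1=2C_P\alpha$ rather than the paper's $T_1=2C_s+2$---are harmless.

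There is one genuine gap. To bound $C_3$ you invoke ``the energy non-increasing hypothesis (which forces $\int_0^\infty F\le E(u_0)$)'', but Lemma \ref{4thLem} assumes only $E=\sup_t E(u(t,\cdot))\le\eps_3$; no monotonicity of $t\mapsto E(u(t,\cdot))$ is available, nor is an energy inequality $\int_0^T\!\int|u_t|^2\le E(u_0)$ established at this point (that is proved later, in Proposition \ref{1stProp}, and only for the specific solutions constructed there). The fix is immediate: since $u\in H^1([0,T_1{+}1]\times B_1)$, one has $\int_0^{T_1}F^h\to\int_0^{T_1}\!\int_{B_1}|u_t|^2<\infty$ by (\ref{8thEqn}), and your $h$-level version of (\ref{15thEqn}) then gives a uniform-in-$h$ bound $F^h(T_1)\le(\alpha+1)T_1^{-1}\int_0^{T_1}F^h$, from which Gronwall and the passage $h\to0$ proceed as you describe. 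The paper instead bounds $\inf_{t\in I}F^h(t)$ by $\int_IF^h$ over a fixed unit interval $I$, which serves the same purpose.
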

\begin{proof}
Assume that $E<\eps_1$. Let $0<h<h_0\ll 1$. We define the difference quotient $D^hu=\left(u(t+h,x)-u(t,x)\right)/h$. Note that 
\begin{equation}
\label{TriFact}
D^hu(t,\cdot)\in H^1_0\cap L^\infty(B_1)\ \text{for}\ a.e.\ t\ \text{fixed}.
\end{equation}
Thus, by equation (\ref{2ndEqn}), we get
\begin{eqnarray*}
&&\frac{d}{dt}\int_{\{t\}\times B_1}|D^hu|^2dx\\
&=&-2\int_{\{t\}\times B_1}|\nabla D^hu|^2dx+2h^{-1}\int_{\{t\}\times B_1}\la A_u(\nabla u,\nabla u),D^hu\ra dx\\
&&-2h^{-1}\int_{\{t\}\times B_1}\la A_{u(t+h,x)}(\nabla u,\nabla u),D^hu\ra dx\\
&\le& -2\int_{\{t\}\times B_1}|\nabla D^hu|^2dx
+\lam_1\int_{\{t\}\times B_1}|D^hu|^2\left(|\nabla u|^2(t,x)+|\nabla u|^2(t+h,x)\right)dx,
\end{eqnarray*}
where $\lam_1=C\sup_M|A|$, and we use (\ref{GeoFact}) and Lemma \ref{EleGeo} in the last inequality. For $t>0$, by  (\ref{GradientSEnergy}), (\ref{TriFact}) and Lemma \ref{2ndLem}, we have
\begin{eqnarray*}
&&\int_{\{t\}\times B_1}|D^hu|^2|\nabla u|^2dx\\
&\le&C_1E\int_{\{t\}\times B_1}|D^hu|^2\left[t^{-1}+(1-|x|^2)^{-2}\right]dx\\
&\le& 4C_1E\int_{\{t\}\times B_1}|\nabla D^hu|^2dx+C_1Et^{-1}\int_{\{t\}\times B_1}|D^hu|^2dx.
\end{eqnarray*}
If $8C_1\lam_1E\le 1$, then 
\begin{equation}
\label{RateEnergy}
\begin{split}
&\quad \frac{d}{dt}\int_{\{t\}\times B_1}|D^hu|^2dx\\
&\le -\int_{\{t\}\times B_1}|\nabla D^hu|^2dx+t^{-1}\int_{\{t\}\times B_1}|D^hu|^2dx\\
&\le -\left(C_s^{-1}-t^{-1}\right)\cdot\int_{\{t\}\times B_1}|D^hu|^2dx,
\end{split}
\end{equation}
where we apply the Sobolev inequality to $D^hu(t,\cdot)\in H^1_0(B_1)$ in the first inequality and $C_s>0$ is the Sobolev constant of $B_1$. 

Thus, integrating over $[t_0/2,t_0]$, (\ref{RateEnergy}) gives
\begin{equation}
\int_{\{t_0\}\times B_1}|D^hu|^2dx\le \frac{4}{t_0}\int_{0}^{t_0}\int_{B_1}|D^hu|^2dxdt.
\end{equation}
On the other hand, let $I=[2C_s,2C_s+1]$ and it is obvious that
\begin{equation}
\inf_{t\in I}\int_{\{t\}\times B_1}|D^hu|^2dx\le \int_I\int_{B_1}|D^hu|^2dxdt.
\end{equation}
Thus, for $t>T_1=2C_s+2$, (\ref{RateEnergy}) also implies that
\begin{equation}
\int_{\{t\}\times B_1}|D^hu|^2dx\le \int_I\int_{B_1}|D^hu|^2dxdt\cdot \exp[-C_s^{-1}(t-T_1)/2].
\end{equation}

Letting $h\To 0$ and by (\ref{8thEqn}), Lemma \ref{4thLem} follows with $\eps_3=\min\{\eps_1,C_1^{-1}\lam_1^{-1}/8\}$, $C_3=\int_I\int_{B_1}|u_t|^2dxdt$ and $\alpha_1=C_s^{-1}/2$.
\end{proof}
Finally, assume that $E<\eps_3$ and $32CC_1E\sup_M|A|<1$. In the calculation below, we first apply H\"{o}lder's inequality to the $L^2$ inner product of $u_t$ and $u(t_2,\cdot)-u(t_1,\cdot)$ on $B_1$. Meanwhile, we use (\ref{GeoFact}), Lemma \ref{EleGeo} and Lemma \ref{2ndLem} to bound the $L^2$ inner product of $A_{u(t_2,x)}(\nabla u,\nabla u)$ and $u(t_2,\cdot)-u(t_1,\cdot)$ on $B_1$. Next, we deduce the last inequality from Cauchy's inequality and the assumption on the upper bound of $E$. That is, for $a.e.$ $T_1<t_1<t_2$, 
\begin{eqnarray*}
&&\int_{B_1}\la \nabla u(t_2,x),\nabla u(t_2,x)-\nabla u(t_1,x)\ra dx\\
&=&\int_{B_1}\la -u_t(t_2,x)-A_{u(t_2,x)}(\nabla u,\nabla u),u(t_2,x)-u(t_1,x)\ra dx\\
&\le& \sqrt{C_s}\left(\int_{B_1}|u_t(t_2,x)|^2dx\right)
^{\frac{1}{2}}\left(\int_{B_1}|\nabla u(t_2,x)-\nabla u(t_1,x)|^2dx\right)^{\frac{1}{2}}\\
&&+4CC_1E\sup_M|A|\cdot\int_{B_1}|\nabla u(t_2,x)-\nabla u(t_1,x)|^2dx\\
&\le& 2C_s\int_{B_1}|u_t(t_2,x)|^2dx+\frac{1}{4}\int_{B_1}|\nabla u(t_2,x)-\nabla u(t_1,x)|^2dx.
\end{eqnarray*}
Similarly,
\begin{eqnarray*}
&&\int_{B_1}\la \nabla u(t_1,x),\nabla u(t_1,x)-\nabla u(t_2,x)\ra dx\\
&\le& 2C_s\int_{B_1}|u_t(t_1,x)|^2dx+\frac{1}{4}\int_{B_1}|\nabla u(t_1,x)-\nabla u(t_2,x)|^2dx.
\end{eqnarray*}
Summing the two inequalities above, we get
\begin{equation}
\int_{B_1}|\nabla u(t_1,x)-\nabla u(t_2,x)|^2dx\le 4C_s\int_{B_1}\left(|u_t(t_1,x)|^2+|u_t(t_2,x)|^2\right)dx.
\end{equation}
Therefore, Theorem \ref{2ndThm} follows immediately from Lemma \ref{4thLem} by choosing $\eps_0=\min\{\eps_3,C^{-1}C^{-1}_1\inf_M|A|^{-1}/32\}$.

\section{Example of harmonic map heat flow not in $V^T$}
In this section, we construct the unique weak solution $u\in\cap_{T>0}H^1([0,T]\times B_1,M)$ of (\ref{HMHFEqn}) starting with small energy initial data $u_0\in H^1\cap C^0(\bar{B}_1,M)$ and boundary data $\gamma=u_0|_{\partial B_1}$. In general, the weak solution $u$ is not in $V^T$.
\begin{prop}
\label{1stProp} There exists $\eps_4>0$, depending only on $M$, so that: given $u_0\in H^1\cap C^0(\bar{B}_1,M)$ with $E(u_0)<\eps_4$, there exists a unique weak solution in $\cap_{T>0}H^1([0,T]\times B_1,M)$ of (\ref{HMHFEqn}) whose energy is non-increasing. Moreover, for $0\le t_1<t_2$,
\begin{equation}
\label{22ndEqn}
\frac{1}{7}\int_{B_1}|\nabla u(t_2,x)-\nabla u(t_1,x)|^2dx\le\int_{B_1}|\nabla u(t_1,x)|^2dx-\int_{B_1}|\nabla u(t_2,x)|^2dx.
\end{equation}
\end{prop}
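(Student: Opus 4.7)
The strategy is to construct $u$ by smooth approximation of $u_0$, invoke Theorem \ref{1stThm} for uniqueness, and derive (\ref{22ndEqn}) at the approximation level before passing to the limit.

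\textbf{Existence and uniqueness.} Since $u_0\in H^1\cap C^0(\bar B_1,M)$, I would first mollify the trace $\gamma=u_0|_{\partial B_1}$ on $\partial B_1$ to obtain smooth approximations $\gamma^{(n)}\to\gamma$ in $C^0\cap H^{1/2}(\partial B_1,M)$ (composing with the nearest-point projection onto $M$), and extend each smoothly into $\bar B_1$ as $u_0^{(n)}\in C^\infty(\bar B_1,M)$ with $u_0^{(n)}\to u_0$ in $H^1\cap C^0(\bar B_1,M)$. Choosing $\eps_4\le\eps_3/2$ ensures $E(u_0^{(n)})<\eps_3$ for $n$ large. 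Since $\gamma^{(n)}\in H^{3/2}(\partial B_1,M)$, the small-energy theory of Struwe \cite{St1} and Freire \cite{Fr2} produces, for each such $n$, a global smooth solution $u_n\in V^T$ (every $T>0$) of (\ref{HMHFEqn}) with non-increasing energy. The uniform energy bound together with the standard kinetic identity $\int_0^T\int|u_{n,t}|^2\le E(u_0^{(n)})$ yield a uniform $H^1([0,T]\times B_1)$-bound on $u_n$; diagonal extraction produces a subsequence converging weakly in $H^1_{\mathrm{loc}}$ and strongly in $L^2_{\mathrm{loc}}$, and the uniform $L^4_{\mathrm{loc}}$-bound on $\nabla u_n$ from Lemma \ref{5thLem} allows passage to the limit in the nonlinear term. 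The limit $u\in\cap_{T>0}H^1([0,T]\times B_1,M)$ is a weak solution with the prescribed data and non-increasing energy, and Theorem \ref{1stThm} gives uniqueness.

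\textbf{Proof of (\ref{22ndEqn}).} For the smooth approximant $u_n$ and $0\le t_1<t_2$, set $w=u_n(t_2,\cdot)-u_n(t_1,\cdot)\in H^1_0\cap L^\infty(B_1,\Real^N)$ (the boundary trace vanishes since $\gamma^{(n)}$ is time-independent). Expanding $\int|\nabla w|^2$ algebraically and using $2E(u_n(t_i))=\int|\nabla u_n(t_i)|^2$ gives
\begin{equation*}
\int_{B_1}|\nabla w|^2\,dx = 2(E(u_n(t_1))-E(u_n(t_2))) + 2\int_{B_1}\la\nabla u_n(t_2),\nabla w\ra\,dx.
\end{equation*}
The weak form (\ref{HMHFEqn2}) at $t=t_2$ with test function $w$ then rewrites the last integral, yielding
\begin{equation*}
\int|\nabla w|^2 = 2(E(u_n(t_1))-E(u_n(t_2))) - 2\int u_{n,t}(t_2)\cdot w\,dx - 2\int A_{u_n(t_2)}(\nabla u_n,\nabla u_n)\cdot w\,dx.
\end{equation*}
The nonlinear piece is bounded via Lemma \ref{EleGeo} (giving $|A_{u_n(t_2)}\cdot w|\le C\sup_M|A|\,|\nabla u_n(t_2)|^2|w|^2$), the interior gradient estimate (\ref{GradientSEnergy}), and Hardy's inequality (Lemma \ref{2ndLem}); for $\eps_4$ small, this contribution is absorbable into $\int|\nabla w|^2$. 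The kinetic term is handled by Cauchy--Schwarz with Poincar\'e against $w$, together with the bound $\|u_{n,t}(t_2)\|_{L^2}^2\le 4(E(u_n(t_1))-E(u_n(t_2)))/(t_2-t_1)$ obtained by applying Lemma \ref{4thLem} to the time-shifted flow starting at $t_1$, producing a contribution comparable to $E(u_n(t_1))-E(u_n(t_2))$. Careful constant-tracking yields (\ref{22ndEqn}) with the factor $1/7$ at the approximate level. Strong $L^2$-convergence and lower semicontinuity of the Dirichlet integral then transfer the inequality to $u$ in the limit.

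\textbf{Main obstacle.} The delicate step is controlling both the nonlinear and kinetic terms uniformly as $t_1\to 0$ or $t_2-t_1\to 0$, where the interior gradient estimate and the shifted Lemma \ref{4thLem}-bound both become singular. I expect this to be resolved by exploiting that $u_0\in C^0(\bar B_1)$ forces $\|w\|_{L^\infty}\to 0$ in these regimes, so the nonlinear term admits the alternative bound $2C\sup_M|A|\,E\|w\|_{L^\infty}^2$ without invoking Hardy, and an analogous $L^\infty$-bootstrap handles the kinetic part. Matching constants across the small- and moderate-$(t_2-t_1)$ regimes to extract the universal constant $7$ is the main bookkeeping challenge.
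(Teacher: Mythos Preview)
Your overall strategy matches the paper's: approximate $u_0$ by smooth data, solve the flow classically for the approximants, derive (\ref{22ndEqn}) at that level by testing the equation at time $t_2$ with $w=u_n(t_2,\cdot)-u_n(t_1,\cdot)$, and pass to the limit. Two steps, however, do not close as written.

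\emph{The kinetic and time-singular terms.} Pairing $\|u_{n,t}(t_2)\|_{L^2}$ with $\|w\|_{L^2}$ via Poincar\'e leaves you with $\sqrt{C_s}\,\|u_{n,t}(t_2)\|_{L^2}\,\|\nabla w\|_{L^2}$; after Young and the shifted Lemma~\ref{4thLem} bound this still carries a factor $(t_2-t_1)^{-1}$, so it is \emph{not} comparable to $E(t_1)-E(t_2)$. The paper instead bounds $\|w\|_{L^2}$ by the time integral, $\|w\|_{L^2}^2\le (t_2-t_1)\int_{t_1}^{t_2}\!\int_{B_1}|\partial_t u_n|^2$, which exactly cancels the $(t_2-t_1)^{-1}$ and gives a clean multiple of $\int_{t_1}^{t_2}\!\int|\partial_t u_n|^2=2(E(t_1)-E(t_2))$. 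The same device is used for the nonlinear term: apply the \emph{shifted} gradient estimate $|\nabla u_n(t_2,x)|^2\le C_1E(u_n(t_1,\cdot))\big[(t_2-t_1)^{-1}+(1-|x|)^{-2}\big]$; the $(1-|x|)^{-2}$ part is absorbed by Hardy into $\int|\nabla w|^2$, while the $(t_2-t_1)^{-1}$ part is again killed by the time-integral bound on $\|w\|_{L^2}^2$. Once you do this, there is no separate ``$t_1\to 0$'' or ``$t_2-t_1\to 0$'' regime to handle, and the $C^0$ bootstrap you propose in the last paragraph is unnecessary.

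\emph{Passing to the limit.} Lower semicontinuity goes the wrong way on the right-hand side of (\ref{22ndEqn}): it would give $E(u(t_2))\le\liminf E(u_n(t_2))$, which does not bound $E(u(t_1))-E(u(t_2))$ from below. You need strong $H^1(B_1)$ convergence $u_n(t,\cdot)\to u(t,\cdot)$ at (a.e.) fixed times. The paper obtains this from the stability Lemma~\ref{3rdLem}, applied to pairs $u_m,u_k$ of approximants: since $u_{m0}-u_{k0}\to 0$ in $H^1\cap C^0$, the quantity $N$ there tends to $0$, so $\nabla u_m$ is Cauchy in $L^2([0,T']\times B_1)$; Fubini then yields a.e.-in-$t$ strong $H^1(B_1)$ convergence along a subsequence, and both sides of (\ref{22ndEqn}) pass to the limit. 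Your weak-$H^1$/strong-$L^2$ compactness alone does not give this.
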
 
\begin{rem}
\label{4thRem}
Recently, Colding and Minicozzi showed that the $H^1$ distance between a harmonic map and a $H^1$ map with the same boundary value can be controlled by their gap in energy, assuming energy is small; see Theorem 3.1 in \cite{CM2}. This is a key ingredient in the proof of the finite extinction of Ricci flow. Combining with Theorem \ref{2ndThm} and Remark \ref{3rdRem}, our estimate (\ref{22ndEqn}) can be viewed as a parabolic version of their theorem. 
\end{rem}
\begin{proof}
First, we may approximate $u_0$ by a sequence of maps $u_{m0}\in C^\infty(\bar{B}_1,M)$ in $H^1\cap C^0$ topology.\footnote{Let $v_{m0}\in C^\infty(\bar{B}_1,\Real^N)$ be the global approximations of $u_0$, constructed in Theorem 3 on page 252 of \cite{E}. Then $u_{m0}$ could be the nearest point projection (onto $M$) of $v_{m0}$.} By Theorem 1.1 in \cite{Ch}, there exists $\delta_1\in(0,\eps_2)$, depending only on $M$, so that: if $E(u_{m0})<\delta_1$, then the weak solution $u_m\in \cap_{T>0}W^{1,2}_p\cap C^{1+(\mu/2),2+\mu}((0,T)\times \bar{B}_1,M)$\footnote{$w\in W^{1,2}_p((0,T)\times \bar{B}_1,M)$ means that $u$, $|\nabla u|$, $|\nabla^2 u|$ and $|u_t|$ are in $L^p((0,T)\times \bar{B}_1)$, and $u\in M$ for $a.e.$ $(t,x)\in (0,T)\times \bar{B}_1$.} of (\ref{HMHFEqn}) exists, $\forall 0<\mu<1,p>4/(1-\mu)$, and for $0\le  t_1<t_2$,
\begin{equation}
\label{23rdEqn}
\int_{t_1}^{t_2}\int_{B_1}|\partial _tu_m|^2dxdt=E(u_m(t_1,\cdot))-E(u_m(t_2,\cdot)).
\end{equation}
If $E(u_0)<\delta_1/2$, then, by Lemma \ref{3rdLem} and a diagonalization argument, there exists a subsequence (relabeled) of $u_m$ satisfying that for $\forall T>0$, $\partial _tu_m\rightharpoonup \partial_tu$ weakly in $L^2([0,T]\times B_1)$, $u_m\To u$ and $\nabla u_m\To \nabla  u$ strongly in $L^2([0,T]\times B_1)$. Note that the boundary data $\gamma_m\To \gamma$ in $H^{1/2}\cap C^0(\partial B_1,M)$. Therefore, $u\in\cap_{T>0}H^1([0,T]\times B_1)$ is the weak solution of (\ref{HMHFEqn}) with initial data $u_0$ and boundary data $\gamma=u_0|_{\partial B_1}$. Moreover, $E(u(t,\cdot))\le E(u_0)$ for $a.e.$ $t$ and there exists a zero measure set $I_1\subseteq(0,\infty)$ so that: if $t_1,t_2\in I_1^c$ and $t_1<t_2$, then
\begin{equation}
\label{EnergyIneqn}
\int_{t_1}^{t_2}\int_{B_1}|u_t|^2dxdt\le E(u(t_1,\cdot))-E(u(t_2,\cdot)).
\end{equation}

Second, we will show that inequality (\ref{EnergyIneqn}) is actually equality for $a.e.$ $0<t_1<t_2$. Let $0<h<h_0\ll 1$. Define $D^hu(t,x)=(u(t+h,x)-u(t,x))/h$. Thus, for $0<t_1<t_2$,
\begin{eqnarray*}
&&\int_{t_1}^{t_2}\int_{B_1}|u_t|^2dxdt=\lim_{h\To 0}\int_{t_1}^{t_2}\int_{B_1}\la u_t,D^hu\ra dxdt\\
&=&\lim_{h\To 0}\int_{t_1}^{t_2}\int_{B_1}-\la \nabla u,\nabla D^hu\ra dxdt-\int_{t_1}^{t_2}\int_{B_1}\la A_u(\nabla u,\nabla u),D^hu\ra dxdt.
\end{eqnarray*}
We will bound the second term. In the following calculation, we use (\ref{GeoFact}) and Lemma \ref{EleGeo} in the first inequality, apply the gradient estimate (\ref{GradientSEnergy}) in the second inequality, and use (\ref{TriFact}) and Lemma \ref{2ndLem} in the last inequality. Thus,
\begin{eqnarray*}
&&|\int_{t_1}^{t_2}\int_{B_1}\la A_u(\nabla u,\nabla u),D^hu\ra dxdt|\\
&\le& hC\sup_M|A|\int_{t_1}^{t_2}\int_{B_1}|\nabla u|^2|D^hu|^2dxdt\\
&\le& hCC_1E(u_0)\sup_M|A|\int_{t_1}^{t_2}\int_{B_1}[t^{-1}+(1-|x|)^{-2}]|D^hu|^2dxdt\\
&\le&t_1^{-1}hCC_1E(u_0)\sup_M|A|\int_{t_1}^{t_2}\int_{B_1}|D^hu|^2dxdt\\
&&+4hCC_1E(u_0)\sup_M|A|\int_{t_1}^{t_2}\int_{B_1}|\nabla D^hu|^2dxdt,
\end{eqnarray*}
For the first term,
\begin{align*}
&\quad\ \int_{t_1}^{t_2}\int_{B_1}-\la \nabla u,\nabla D^hu \ra dxdt\\
&=\frac{1}{2h}\int_{t_1}^{t_2}\int_{B_1}\left(|\nabla u(t,x)|^2-|\nabla u(t+h,x)|^2\right)dxdt+\frac{h}{2}\int_{t_1}^{t_2}\int_{B_1}|\nabla D^hu|^2dxdt\\
&=\frac{1}{2h}\int_{t_1}^{t_1+h}\int_{B_1}|\nabla u|^2dxdt-\frac{1}{2h}\int_{t_2}^{t_2+h}\int_{B_1}|\nabla u|^2dxdt+\frac{h}{2}\int_{t_1}^{t_2}\int_{B_1}|\nabla D^hu|^2dxdt.
\end{align*}
If $E(u_0)<C^{-1}C_1^{-1}\inf_M|A|^{-1}/8$, then
\begin{eqnarray*}
&&\int_{t_1}^{t_2}\int_{B_1}|u_t|^2dxdt\\
&\ge&\lim_{h\To 0}\frac{1}{2h}\int_{t_1}^{t_1+h}\int_{B_1}|\nabla u|^2dxdt-\frac{1}{2h}\int_{t_2}^{t_2+h}\int_{B_1}|\nabla u|^2dxdt-\frac{h}{8t_1}\int_{t_1}^{t_2}\int_{B_1}|D^hu|^2dxdt\\
&\ge&\lim_{h\To 0}\frac{1}{2h}\int_{t_1}^{t_1+h}\int_{B_1}|\nabla u|^2dxdt-\frac{1}{2h}\int_{t_2}^{t_2+h}\int_{B_1}|\nabla u|^2dxdt.
\end{eqnarray*}
Define $f(t)=\int_{B_1}|\nabla u(t,x)|^2dx$ and $F(t)=\int_0^tf(s)ds$. Since $f\in L^1([0,T])$ for $\forall T\in (0,\infty)$, $F'(t)=f(t)$ for $a.e.$ $t\in [0,T]$. Hence, there exists a zero measure set $I_2\subseteq (0,\infty)$ so that: if $t\in I_2^c$, then
\begin{equation}
\label{CalculusThm}
\lim_{h\To 0}\frac{1}{h}\int_t^{t+h}\int_{B_1}|\nabla u|^2dxdt=\int_{B_1}|\nabla u(t,x)|^2dx.
\end{equation}
Therefore, if $t_1,t_2\in I_2^c$ and  $0<t_1<t_2$, then
\begin{equation}
\label{EnergyIneqn2}
\int_{t_1}^{t_2}\int_{B_1}|u_t|^2dxdt\ge E(u(t_1,\cdot))-E(u(t_2,\cdot)).
\end{equation}
Combining with inequality (\ref{EnergyIneqn}), we get
\begin{equation}
\label{EnergyEqn}
\int_{t_1}^{t_2}\int_{B_1}|u_t|^2dxdt=E(u(t_1,\cdot))-E(u(t_2,\cdot)),
\end{equation}
if $t_1,t_2\in (I_1\cup I_2)^c$ and $0<t_1<t_2$.

Third, assume that $E(u_0)<\min\{\eps_3/2,\delta_1/2,C^{-1}C_1^{-1}\inf_M|A|^{-1}/32\}$. Since $u_{m0}\To u_0$ in $H^1(B_1,M)$ topology, we may also assume that $E(u_{m0})\le 2E(u_0)$. Thus, for $0\le t_1<t_2$, using (\ref{GradientSEnergy}), Lemmas \ref{EleGeo}, \ref{2ndLem} and \ref{4thLem},\footnote{Note that Lemma \ref{4thLem} holds for every $t_0>0$ for $u_m$, since $u_m\in C^{1+(\mu/2),2+\mu}((0,\infty)\times \bar{B}_1,M)$ and $\norm{u_t(t,\cdot)}_{L^2(B_1)}$ is continuous in $t$.} we estimate
\begin{eqnarray*}
&&\int_{B_1}\la \nabla u_m(t_2,x),\nabla u_m(t_2,x)-\nabla u_m(t_1,x)\ra dx\\
&=&\int_{B_1}\la -\partial_tu_m(t_2,x)-A_{u_m(t_2,x)}(\nabla u_m,\nabla u_m),u_m(t_2,x)-u_m(t_1,x)\ra dx\\
&\le& \left(\int_{B_1}|\partial_tu_m(t_2,x)|^2dx\right)^{\frac{1}{2}}
\left(\int_{B_1}|u_m(t_1,x)-u_m(t_2,x)|^2dx\right)^{\frac{1}{2}}\\
&&+C\sup_M|A|\int_{B_1}|\nabla u_m(t_2,x)|^2|u_m(t_1,x)-u_m(t_2,x)|^2dx\\
&\le& \sqrt{t_2-t_1}\cdot\left(\int_{B_1}|\partial_t u_m(t_2,x)|^2\right)^{\frac{1}{2}}\left(\int_{t_1}^{t_2}\int_{B_1}|\partial_t
u_m|^2dxdt\right)^{\frac{1}{2}}\\
&&+CC_1E(u_m(t_1,\cdot))\sup_M|A|\int_{B_1}(t_2-t_1)^{-1}|u_m(t_1,x)-u_m(t_2,x)|^2dx\\
&&+CC_1E(u_m(t_1,\cdot))\sup_M|A|\int_{B_1}\frac{|u_m(t_1,x)-u_m(t_2,x)|^2}
{(1-\sqrt{x_1^2+x_2^2})^2}dx\\
&\le&\frac{33}{16}\int_{t_1}^{t_2}\int_{B_1}|\partial_t u_m|^2dxdt+\frac{1}{4}\int_{B_1}|\nabla u_m(t_2,x)-\nabla u_m(t_1,x)|^2dx,
\end{eqnarray*}
where we use the fact that the energy of $u_m$ is non-increasing in the last inequality. Thus, 
\begin{eqnarray*}
&&\int_{B_1}|\nabla u_m(t_1,x)|^2dx-\int_{B_1}|\nabla u_m(t_2,x)|^2dx\\
&=&\int_{B_1}|\nabla u_m(t_1,x)-\nabla u_m(t_2,x)|^2dx\\
&&+2\int_{B_1}\la \nabla u_m(t_2,x),\nabla u_m(t_1,x)-\nabla u_m(t_2,x)\ra dx\\
&\ge&\frac{1}{2}\int_{B_1}|\nabla u_m(t_1,x)-\nabla u_m(t_2,x)|^2dx-5\int_{t_1}^{t_2}\int_{B_1}|\partial_tu_m|^2dxdt.
\end{eqnarray*}
Hence, it follows from (\ref{23rdEqn}) that
\begin{equation}
\label{24thEqn}
\frac{1}{7}\int_{B_1}|\nabla u_m(t_1,x)-\nabla u_m(t_2,x)|^2dx\le\int_{B_1}|\nabla u_m(t_1,x)|^2dx-\int_{B_1}|\nabla u_m(t_2,x)|^2dx.
\end{equation}
By Lemma \ref{3rdLem}, there exists a zero measure set $I_3\subseteq (0,\infty)$ so that: if $t\in I_3^c$, then there exists a subsequence (relabeled) of $u_m$ such that $u_m(t,\cdot)\To u(t,\cdot)$ in $H^1(B_1,M)$ topology. Hence, if $0<t_1<t_2$ and $t_1,t_2\in (I_1\cup I_2\cup I_3)^c$, then
\begin{eqnarray}
&&\frac{1}{7}\int_{B_1}|\nabla u(t_2,x)-\nabla u(t_1,x)|^2dx\le\int_{B_1}|\nabla u(t_1,x)|^2dx-\int_{B_1}|\nabla u(t_2,x)|^2dx\\
&&\frac{1}{2}\int_{B_1}|\nabla u(t_1,x)|^2dx-\frac{1}{2}\int_{B_1}|\nabla u(t_2,x)|^2dx=\int_{t_1}^{t_2}\int_{B_1}|u_t|^2dxdt.
\end{eqnarray}
We can modify the definition of $u$ on $(I_1\cup I_2\cup I_3)\times B_1$ by taking limits. Therefore, the modified map solves (\ref{HMHFEqn}) satisfying that the energy is non-increasing and the uniqueness for weak solutions of the harmonic map heat flow follows from Theorem \ref{1stThm}.
\end{proof}

\appendix
\section{}
\begin{prop}
\label{IntrinDist}
There exists $\eps_5>0$, depending on $M$, so that: if $|x-y|<\eps_5$, then $\dist_{M}(x,y)<2|x-y|$, where $\dist_M(x,y)$ is the intrinsic distance between $x$ and $y$ on $M$.
\end{prop}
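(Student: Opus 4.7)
The plan is to construct a short curve in $M$ from $x$ to $y$ by projecting the Euclidean line segment $[x,y]\subset\Real^N$ onto $M$ via the nearest-point projection, and to bound the length of the projected curve using the fact that the differential of the projection is close to an orthogonal projection near $M$.

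First, I would invoke the standard tubular neighborhood theorem. Since $M$ is closed (compact, without boundary) and smoothly embedded in $\Real^N$, there exist $r_0>0$ and a smooth nearest-point projection $\pi\colon N_{r_0}(M)\to M$ on the open tubular neighborhood $N_{r_0}(M)=\{z\in\Real^N:\dist(z,M)<r_0\}$; this is a consequence of the inverse function theorem applied to the normal exponential map, together with compactness of $M$. At each $p\in M$, $d\pi(p)$ is the orthogonal projection of $\Real^N$ onto $T_pM$, so its operator norm equals one. By continuity of $d\pi$ and compactness of $M$, I may shrink $r_0$ to some $r\in(0,r_0]$ with $\|d\pi(q)\|_{\mathrm{op}}\le 3/2$ for every $q\in N_r(M)$.

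Next, I would set $\eps_5=r$. Given $x,y\in M$ with $|x-y|<\eps_5$, the Euclidean segment $\gamma(t)=(1-t)x+ty$, $t\in[0,1]$, satisfies
\begin{equation*}
\dist(\gamma(t),M)\le|\gamma(t)-x|=t|x-y|<r,
\end{equation*}
so $\gamma$ lies in $N_r(M)$, and $\pi\circ\gamma$ is a smooth curve in $M$ joining $x$ to $y$. Therefore
\begin{equation*}
\dist_M(x,y)\le\int_0^1|(\pi\circ\gamma)'(t)|\,dt\le\int_0^1\|d\pi(\gamma(t))\|_{\mathrm{op}}\,|\gamma'(t)|\,dt\le\frac{3}{2}|x-y|<2|x-y|.
\end{equation*}

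The only technical input is the existence and smoothness of $\pi$ on a uniform tubular neighborhood, which is entirely standard and depends only on compactness of $M$. The constant $3/2$ is not sharp; in fact $\|d\pi(q)\|_{\mathrm{op}}$ can be made arbitrarily close to $1$ by further shrinking $r$, so one could replace the factor $2$ in the conclusion by any fixed constant larger than $1$. I do not anticipate any substantial obstacle in this argument.
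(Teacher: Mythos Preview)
Your argument is correct and takes a genuinely different route from the paper's. The paper argues by contradiction: assuming a sequence $(x_j,y_j)$ with $|x_j-y_j|\to 0$ and $\dist_M(x_j,y_j)\ge 2|x_j-y_j|$, it passes to a limit $x_0$, takes the unique minimizing geodesic $\gamma_j$ in a small convex ball, and computes $|y_j-x_j|^2=\int_0^{l_j}\int_0^s(2+2\langle\gamma_j(\tau)-x_j,\gamma_j''(\tau)\rangle)\,d\tau\,ds\ge l_j^2/2$ using the uniform bound on $|A|$; this yields $l_j\le\sqrt{2}\,|x_j-y_j|$, a contradiction. Thus the paper produces the competitor curve intrinsically (the geodesic) and estimates the extrinsic distance from it, whereas you construct the competitor extrinsically (the projected segment) and estimate its intrinsic length. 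Your approach is more direct---no contradiction, no geodesic convexity---but it imports the tubular neighborhood theorem and smoothness of the nearest-point projection as a black box; the paper's argument is more self-contained and makes explicit the dependence on $\sup_M|A|$. Both give the sharper observation that the constant $2$ can be replaced by any number larger than $1$.
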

\begin{proof}
If not, then there exists a sequence of $(x_j,y_j)\in M\times M$ such that $|x_j-y_j|\To 0$ but $\dist_{M}(x_j,y_j)\ge 2|x_j-y_j|$. Since $M$ is compact, there exist $x_0\in M$ and a subsequence (relabeled) of $(x_j,y_j)$ satisfying that $x_j\To x_0$ and $y_j\To x_0$. There exists $0<\delta_1<\sup_M|A|/4$ such that the geodesic ball $B^M_{\delta_1}(x_0)$ centered at $x_0$ with radius $\delta_1$ is strictly geodesically convex. If $j$ is sufficiently large, then $x_j$ and $y_j$ are in $B^M_{\delta_1}(x_0)$. Let $l_j$ be the geodesic distance between $x_j$ and $y_j$, and $\gamma_j:[0,l_j]\To B_{\delta_1}^M(x_0)$ be the unit speed minimizing geodesic joining $x_j$ and $y_j$. Thus,
\begin{eqnarray*}
|y_j-x_j|^2&=&\int_0^{l_j}2\la \gamma_j(s)-x_j,\gamma_j'(s)\ra ds\\
&=&\int_0^{l_j}\int_0^s\left(2|\gamma_j'(\tau)|^2+2\la \gamma_j(\tau)-x_j,\gamma ''(\tau)\ra\right)d\tau ds\\
&\ge&\int_0^{l_j}\int_0^s2\left(1-2\delta_1\sup_M|A|\right)d\tau ds\\
&\ge&\frac{l_j^2}{2}.
\end{eqnarray*}
Therefore, $\dist_M(x_j,y_j)\le \sqrt{2}|x_j-y_j|$ and this is a contradiction.
\end{proof}
\begin{proof}
(of Lemma \ref{EleGeo}) If $|x-y|\ge\eps_5$, then $|(x-y)^{\perp}|/|x-y|^2\le\eps_5^{-1}$. Otherwise, let $\gamma:[0,l]\To M$ be the minimizing geodesic joining $y$ to $x$ with length $l\le 2|x-y|$.
\begin{eqnarray*}
|(x-y)^{\perp}|&=&\int_0^l\la \gamma '(s),V\ra ds=\int_0^l\int_0^s\la \gamma ''(\tau),V\ra d\tau ds\\
&\le&\sup_M|A|\cdot\frac{l^2}{2}\le 2\sup_M|A|\cdot|x-y|^2,
\end{eqnarray*}
where $V=(x-y)^{\perp}/|(x-y)^{\perp}|$. Therefore, Lemma \ref{EleGeo} follows immediately with $C=\max\{\eps_5^{-1},2\sup_M|A|\}$.
\end{proof}

\bibliographystyle{plain}

\end{document}